\providecommand{\U}[1]{\protect\rule{.1in}{.1in}}
\newtheorem{theorem}{Theorem}
\newtheorem{lemma}[theorem]{Lemma}
\newtheorem{remark}[theorem]{Remark}
\newenvironment{proof}[1][Proof]{\noindent\textbf{#1.} }{\ \rule{0.5em}{0.5em}}
\numberwithin{equation}{section}
\begin{document}

\title{Generalizations of the Euler-Mascheroni constant associated with the
hyperharmonic numbers}
  \author{M\"{u}m\"{u}n Can, Ayhan Dil, Levent Karg\i n,\\
  Mehmet Cenkci, Mutlu G\"{u}lo\u{g}lu
 \\ \small{Department of Mathematics, Akdeniz University, TR-07058 Antalya, Turkey}\\
\small{mcan@akdeniz.edu.tr,  adil@akdeniz.edu.tr,  lkargin@akdeniz.edu.tr,}\\
\small{cenkci@akdeniz.edu.tr,  guloglu@akdeniz.edu.tr}}
\maketitle
\begin{abstract}
In this paper, we present two new generalizations of the Euler-Mascheroni
constant arising from the Dirichlet series associated to the hyperharmonic
numbers. We also give some inequalities related to upper and lower estimates,
and evaluation formulas.

{\bf Keywords:} Euler-Mascheroni constant, harmonic numbers, hyperharmonic numbers, digamma function, Euler-type sums, Stirling numbers of the first kind

{\bf MSC 2010} 11Y60, 11B83, 33B15, 11M41, 11B73
\end{abstract}

\section{Introduction}

The Euler-Mascheroni constant $\gamma=0.577\,215\,664\,9\ldots$ occurs in
estimating the growth rate of the harmonic series:%
\begin{equation}
H_{n}=1+\frac{1}{2}+\cdots+\frac{1}{n}\approx\ln n+\gamma.\label{H1}%
\end{equation}
The importance of the constant $\gamma$\ goes beyond its definition as it
turns up in analysis, number theory, probability, and special functions. For
instance, we have $\psi\left(  1\right)  =\Gamma^{^{\prime}}\left(  1\right)
=-\gamma$, where%
\[
\Gamma\left(  x\right)  =\int_{0}^{\infty}t^{x-1}e^{-t}dt,\text{ }x>0
\]
is Euler's gamma function and $\psi\left(  x\right)  =\Gamma^{^{\prime}%
}\left(  x\right)  /\Gamma\left(  x\right)  $ is the digamma function.
$\gamma$ is also related to the Riemann zeta function%
\begin{equation}
\zeta\left(  s\right)  =\sum_{n=1}^{\infty}\frac{1}{n^{s}},\text{
}\operatorname{Re}s>1,\label{Z}%
\end{equation}
in that (see \cite{SC} and \cite{GKP})%
\[
\gamma=\sum_{k=2}^{\infty}\frac{\left(  -1\right)  ^{k}\zeta\left(  k\right)
}{k}\text{ and }\gamma=1-\sum_{k=2}^{\infty}\frac{\zeta\left(  k\right)
-1}{k}.
\]
It might be worthful to record that the Euler-Mascheroni constant is nothing
but the sum of the Mascheroni series (see \cite{Ma})%
\[
\gamma=\sum_{k=2}^{\infty}\frac{\left\vert b_{k}\right\vert }{k},
\]
where $b_{k}$ are the Bernoulli numbers of the second kind (or the Gregory
coefficients) \cite{Bl, J}.

The definition and representations above provide several important
generalizations for the Euler-Mascheroni constant. For instance,
interpretating (\ref{H1})\textbf{ }as%
\[
\gamma=\lim_{n\rightarrow\infty}\left(  \sum_{k=1}^{n}\frac{1}{k}-\int_{1}%
^{n}\frac{1}{x}dx\right)
\]
leads to a motivation to consider the limit\textbf{ }%
\begin{equation}
\lim_{n\rightarrow\infty}\left(  \sum_{k=1}^{n}f\left(  k\right)  -\int%
_{1}^{n}f\left(  x\right)  dx\right)  \label{MaC}%
\end{equation}
(note that if $f:\left(  0,\infty\right)  \rightarrow\left(  0,\infty\right)
$ is continuous, strictly decreasing and $\lim\limits_{x\rightarrow\infty
}f\left(  x\right)  =0,$ then the limit (\ref{MaC}) exists (cf. \cite{We})).
Certain functions are then found to be of special importance in connection
with the study of $\gamma$. Evidently, the function $f\left(  x\right)  =1/x$
in (\ref{MaC}) corresponds to the constant $\gamma$. The function $f\left(
x\right)  =1/x^{s}$ with $0<s\leq1$ gives the so called generalized
Euler-Mascheroni constants $\gamma^{\left(  s\right)  }$:%
\[
\gamma^{\left(  s\right)  }=\lim_{n\rightarrow\infty}\left(  \sum_{k=1}%
^{n}\frac{1}{k^{s}}-\int_{1}^{n}\frac{1}{x^{s}}dx\right)  ,\text{ }0<s\leq1.
\]
The Stieltjes constant $\gamma_{m}$ arises from the choice of $f\left(
x\right)  =\left(  \ln^{m}x\right)  /x$, namely%
\[
\gamma_{m}=\lim_{n\rightarrow\infty}\left(  \sum_{k=1}^{n}\frac{\ln^{m}k}%
{k}-\int_{1}^{n}\frac{\ln^{m}x}{x}dx\right)  .
\]
The Stieltjes constant and the Riemann zeta function share a particular
relationship\textbf{ }in that%
\[
\zeta\left(  s\right)  =\frac{1}{s-1}+\sum_{k=0}^{\infty}\frac{\left(
-1\right)  ^{k}}{k!}\gamma_{k}\left(  s-1\right)  ^{k},
\]
i.e., $\gamma_{m}$ appears in the Laurent expansion of $\zeta\left(  s\right)
$.

The constants $\gamma$, $\gamma^{\left(  s\right)  }$, and $\gamma_{m}$ can in
fact be obtained when we consider the multivariate function $f\left(
x,s\right)  =1/x^{s}$. More precisely, the limit (\ref{MaC}) corresponds to%
\[
\left\{
\begin{array}
[c]{ll}%
\gamma, & \text{for }f=f\left(  x,1\right)  ,\\
\gamma^{(s)}, & \text{for }f=f\left(  x,s\right)  \text{ with }0<s\leq1,\\
\gamma_{m}, & \text{for }f=\left.  \frac{\partial^{m}}{\partial s^{m}}f\left(
x,s\right)  \right\vert _{s=1}.
\end{array}
\right.
\]

It is seen that these constants emerge in the interval $0<s\leq1$ where the
series (\ref{Z}) is divergent. This observation suggests to consider
generalizations of the Riemann zeta function for general versions of the
Euler-Mascheroni constant. The classical example in this direction is the
Hurwitz zeta function%
\[
\zeta\left(  s,a\right)  =\sum_{n=0}^{\infty}\frac{1}{\left(  n+a\right)
^{s}},\text{ }\operatorname{Re}\left(  s\right)  >1,\text{ }a\neq
0,-1,-2,\ldots.
\]
This series diverges when $s\leq1,$ hence $f=f(x,a,s)=1/\left(  x+a\right)
^{s}$ in (\ref{MaC}) with $s=1$ brings out the generalized Euler-Mascheroni
constant $\gamma\left(  a\right)  $, that is,%
\[
\gamma\left(  a\right)  =\lim_{n\rightarrow\infty}\left(  \sum_{k=0}^{n}%
\frac{1}{k+a}-\int_{0}^{n}\frac{1}{x+a}dx\right)
\]
(see \cite[p. 453]{Kn}). Accordingly, $f=\left.  \frac{\partial^{m}}{\partial
s^{m}}\frac{1}{\left(  x+a\right)  ^{s}}\right\vert _{s=1}$ in (\ref{MaC})
introduces the generalized Stieltjes constants $\gamma_{m}(a)$, i.e.,%
\[
\gamma_{m}\left(  a\right)  =\lim_{n\rightarrow\infty}\left(  \sum_{k=0}%
^{n}\frac{\ln^{m}\left(  k+a\right)  }{k+a}-\int_{0}^{n}\frac{\ln^{m}\left(
x+a\right)  }{x+a}dx\right)  .
\]
We note that the constants $\gamma_{m}\left(  a\right)  $ occur in the Laurent
series expansion of the Hurwitz zeta function (see \cite{Be,Wi}).

Generalizations of the Euler-Mascheroni constant have been studied
extensively. They appear in the evaluation of series and integrals containing
some important functions (see, for example, \cite{AS, BlC, Bo, BK,Briggs,
CaC,CK, Co, Co2019, CY, DBA,FPS,Lehmer,MS,SC}). Besides their lower and upper
bounds are studied (see, for example
\cite{Be,Bl-1,DeT,Ka,La,Lu,LuSY,Mo,S2007,S2013,Yo}) and some evaluation
formulas are given (see, for example \cite{Bl,JB,SoH,Xu}).

In this study, we consider the following Dirichlet series (so called the Euler
sums of the hyperharmonic numbers \cite{DB,MD} and also see \cite{KCDC})%
\begin{equation}
\sum_{k=1}^{\infty}\frac{h_{k}^{\left(  r\right)  }}{k^{s}}\text{ and }%
\sum_{k=1}^{\infty}\frac{h_{k}^{\left(  r\right)  }}{k^{\overline{s}}},
\label{eshhn}%
\end{equation}
which are generalizations of $\zeta\left(  s\right)  $. Here $x^{\overline{r}%
}=\Gamma\left(  x+r\right)  /\Gamma\left(  x\right)  $ for $x,r+x\in
\mathbb{C}\backslash\left\{  0,-1,-2,...\right\}  $, and $h_{n}^{(r)}$ are the
hyperharmonic numbers defined by \cite{CG}%
\[
h_{n}^{(r)}=\sum_{k=1}^{n}h_{k}^{(r-1)}\text{ with\ }h_{n}^{(0)}=\frac{1}%
{n},\text{ }n,r\in\mathbb{N}.
\]
It is known that both series in (\ref{eshhn})\ are convergent for $s>r$ and
divergent for $0<s\leq r$ (see \cite{MD}). We mainly focus on the limit
(\ref{MaC}) for $f=f\left(  x,r\right)  =h_{x}^{\left(  r\right)  }/x^{r}$ and
$f=f\left(  x,r\right)  =h_{x}^{\left(  r\right)  }/x^{\overline{r}}$, where
$h_{x}^{\left(  r\right)  }$ is the analytic extension of $h_{n}^{(r)}$,
defined by \cite{M}
\begin{equation}
h_{x}^{\left(  r\right)  }=\frac{x^{\overline{r}}}{x\Gamma\left(  r\right)
}\left(  \psi\left(  x+r\right)  -\psi\left(  r\right)  \right)  ,\text{
}r,x,r+x\in\mathbb{C}\backslash\left\{  0,-1,-2,...\right\}  . \label{2}%
\end{equation}

We first introduce the family of constants corresponding to the function
$f=f\left(  x,r\right)  =h_{x}^{\left(  r\right)  }/x^{r}$ in the limit
(\ref{MaC}). Let $\left\{  y_{n}\left(  r\right)  \right\}  _{n=1}^{\infty}$
and $\left\{  z_{n}\left(  r\right)  \right\}  _{n=1}^{\infty}$ be sequences
defined by
\[
y_{n}\left(  r\right)  =\sum_{k=1}^{n}\frac{h_{k}^{\left(  r\right)  }}{k^{r}%
}-\int_{1}^{n}\frac{h_{x}^{\left(  r\right)  }}{x^{r}}dx\text{ and }%
z_{n}\left(  r\right)  =\sum_{k=1}^{n-1}\frac{h_{k}^{\left(  r\right)  }%
}{k^{r}}-\int_{1}^{n}\frac{h_{x}^{\left(  r\right)  }}{x^{r}}dx.
\]
We prove the following:

\begin{theorem}
\label{mteo1}Let $r\in\left[  0,+\infty\right)  $. Then,

\noindent a) The sequences $\left\{  y_{n}\left(  r\right)  \right\}
_{n=1}^{\infty}$ and $\left\{  z_{n}\left(  r\right)  \right\}  _{n=1}%
^{\infty}$ satisfy
\[
0\leq z_{n}\left(  r\right)  <z_{n+1}\left(  r\right)  <\cdots<y_{n+1}\left(
r\right)  <y_{n}\left(  r\right)  \leq1\text{, for each }n\in\mathbb{N}%
\text{.}%
\]

\noindent b) The sequences $\left\{  y_{n}\left(  r\right)  \right\}
_{n=1}^{\infty}$ and $\left\{  z_{n}\left(  r\right)  \right\}  _{n=1}%
^{\infty}$ converge to the common limit denoted by $\gamma_{h^{\left(
r\right)  }}$, i.e.,
\[
\gamma_{h^{\left(  r\right)  }}=\lim_{n\rightarrow\infty}z_{n}\left(
r\right)  =\lim_{n\rightarrow\infty}y_{n}\left(  r\right)  .
\]

\noindent c) For each $n\in\mathbb{N}$, we have the following estimates:
\[
A\left(  n,r\right)  <\gamma_{h^{\left(  r\right)  }}-z_{n}\left(  r\right)
<A\left(  n,r\right)  +\frac{h_{n+1}^{\left(  r\right)  }}{\left(  n+1\right)
^{r}}%
\]
\ and\
\[
B\left(  n,r\right)  -\frac{h_{n+1}^{\left(  r\right)  }}{\left(  n+1\right)
^{r}}<y_{n}\left(  r\right)  -\gamma_{h^{\left(  r\right)  }}<B\left(
n,r\right)  ,
\]
where $A\left(  n,r\right)  =h_{n}^{\left(  r\right)  }/n^{r}-\int_{n}%
^{n+1}\left(  h_{x}^{\left(  r\right)  }/x^{r}\right)  dx$ and $B\left(
n,r\right)  =\int_{n}^{n+1}\left(  h_{x}^{\left(  r\right)  }/x^{r}\right)
dx.$
\end{theorem}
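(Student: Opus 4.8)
Write $f(x)=h_{x}^{(r)}/x^{r}$ for the integrand, so that $y_{n}(r)=\sum_{k=1}^{n}f(k)-\int_{1}^{n}f(x)\,dx$ and $z_{n}(r)=y_{n}(r)-f(n)$. The entire argument rests on three analytic facts about $f$ on $[1,\infty)$: that $f(x)>0$, that $f$ is strictly decreasing, and that $f(x)\to 0$ as $x\to\infty$. I would isolate these as a preliminary lemma, since verifying them is the real content. For $r>0$ the analytic extension (\ref{2}) gives $f(x)=\frac{x^{\overline{r}}}{x^{r+1}\Gamma(r)}\bigl(\psi(x+r)-\psi(r)\bigr)$; positivity follows from $x^{\overline{r}}>0$ together with $\psi$ being strictly increasing on $(0,\infty)$, and the limit $f(x)\to 0$ follows from $x^{\overline{r}}\sim x^{r}$ and $\psi(x+r)\sim\ln x$, which yield $f(x)\sim(\ln x)/(x\Gamma(r))$; the case $r=0$ reduces to the classical $f(x)=1/x$. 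The strict monotonicity is the genuine obstacle, and I expect it to require differentiating $f$ and showing the logarithmic derivative is negative, i.e. controlling $\psi'(x+r)$ against $\psi(x+r)-\psi(r)$ uniformly on $[1,\infty)$ for every $r\ge 0$.

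Granting these three properties, part (a) is a direct computation. Telescoping the definitions gives
\[
y_{n}(r)-y_{n+1}(r)=\int_{n}^{n+1}\bigl(f(x)-f(n+1)\bigr)\,dx>0,\qquad z_{n+1}(r)-z_{n}(r)=\int_{n}^{n+1}\bigl(f(n)-f(x)\bigr)\,dx>0,
\]
both strict because $f$ is strictly decreasing, so $\{y_{n}\}$ decreases and $\{z_{n}\}$ increases. Since $y_{n}(r)-z_{n}(r)=f(n)>0$ we have $z_{n}<y_{n}$, and the interlacing $z_{n}(r)<y_{n+1}(r)$ follows from $y_{n+1}(r)-z_{n}(r)=f(n)+f(n+1)-\int_{n}^{n+1}f\ge f(n+1)>0$. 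The endpoints come from $z_{1}(r)=0$ (empty sum and vanishing integral) and $y_{1}(r)=f(1)=h_{1}^{(r)}=1$, the last value equaling $1$ for all $r\ge 0$ by (\ref{2}) since $1^{\overline{r}}=r\Gamma(r)$ and $\psi(1+r)-\psi(r)=1/r$. Combining, $0\le z_{n}<z_{n+1}<\cdots<y_{n+1}<y_{n}\le 1$.

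For part (b), $\{y_{n}\}$ is decreasing and bounded below by $0$, while $\{z_{n}\}$ is increasing and bounded above by $1$, so both converge; since their difference $y_{n}(r)-z_{n}(r)=f(n)\to 0$, the two limits coincide, and we denote the common value by $\gamma_{h^{(r)}}$.

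For part (c) I would sum the one-step differences. Writing $A(n,r)=z_{n+1}(r)-z_{n}(r)=f(n)-\int_{n}^{n+1}f$ and $B(n,r)=\int_{n}^{n+1}f$, telescoping to the limit gives $\gamma_{h^{(r)}}-z_{n}(r)=\sum_{k\ge n}A(k,r)$ and $y_{n}(r)-\gamma_{h^{(r)}}=\sum_{k\ge n}\bigl(B(k,r)-f(k+1)\bigr)$. The lower bound $A(n,r)<\gamma_{h^{(r)}}-z_{n}(r)$ is immediate, since the omitted tail $\sum_{k\ge n+1}A(k,r)$ is a sum of positive terms; the upper bound follows from $A(k,r)<f(k)-f(k+1)$ (strict decrease forces $\int_{k}^{k+1}f>f(k+1)$), which telescopes, using $f\to 0$, to $\sum_{k\ge n+1}A(k,r)<f(n+1)=h_{n+1}^{(r)}/(n+1)^{r}$. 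For the $y$-estimate, write $y_{n}(r)-\gamma_{h^{(r)}}=\bigl(B(n,r)-f(n+1)\bigr)+\bigl(y_{n+1}(r)-\gamma_{h^{(r)}}\bigr)$; the left inequality is the positivity of $y_{n+1}(r)-\gamma_{h^{(r)}}$, while the right inequality uses $y_{n+1}(r)-\gamma_{h^{(r)}}<y_{n+1}(r)-z_{n+1}(r)=f(n+1)$, valid because $z_{n+1}(r)<\gamma_{h^{(r)}}$. This produces exactly the two displayed chains.
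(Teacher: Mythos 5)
Your skeleton is sound and in fact coincides with the paper's: both arguments reduce everything to three facts about $f(x)=h_{x}^{(r)}/x^{r}$ on $[1,\infty)$ --- positivity, strict decrease, and $f(x)\to 0$ --- and your derivations of the interlacing chain, the endpoints $z_{1}(r)=0$ and $y_{1}(r)=h_{1}^{(r)}=1$, the common limit, and the part (c) estimates from these facts are all correct (the paper phrases the one-step differences via the mean value theorem for integrals, you via direct integral comparison, which is the same computation). The problem is that you never prove the one fact you yourself call ``the genuine obstacle'': the strict decrease of $f$. You only record an expectation that it would ``require differentiating $f$ and showing the logarithmic derivative is negative.'' That is precisely where all of the paper's technical work lives, so as it stands the proposal establishes the theorem only modulo its central lemma, and parts (a)--(c) are all conditional on it.

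For comparison, the paper writes $\Gamma(r)f(x)=\bigl(x^{\overline{r}}/x^{r}\bigr)\cdot\bigl(\psi(x+r)-\psi(r)\bigr)/x$ and shows each factor is positive and decreasing in $x$. The second factor is handled by Lemma \ref{soru}: the auxiliary function $\beta(x,r)=x\psi'(x+r)-\psi(x+r)+\psi(r)$ tends to $0$ as $x\to 0^{+}$ and is decreasing because $\psi''<0$, hence is negative --- exactly the comparison of $\psi'(x+r)$ against $\psi(x+r)-\psi(r)$ you anticipated, but applied to the quotient $\bigl(\psi(x+r)-\psi(r)\bigr)/x$ rather than to $f$ itself. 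The first factor is reduced to the inequality $\psi(x+r)-\psi(x)<r/x$, obtained from the same lemma by a substitution (note that the substitution as stated needs the lemma with $y=r>x=1$, so even this route requires extra care when $0<r<1$). The moral is that the monotonicity of $h_{x}^{(r)}/x^{r}$ is genuinely delicate and cannot be deferred: you should supply the factorization and the two monotonicity estimates explicitly, at which point the rest of your write-up goes through verbatim.
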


We next consider the function $f=f\left(  x,r\right)  =h_{x}^{\left(
r\right)  }/x^{\overline{r}}$ in the limit (\ref{MaC}). Let $\left\{
a_{n}\left(  r\right)  \right\}  _{n=1}^{\infty}$ and $\left\{  b_{n}\left(
r\right)  \right\}  _{n=1}^{\infty}$ be sequences defined by
\[
a_{n}\left(  r\right)  =\sum_{k=1}^{n-1}\frac{h_{k}^{\left(  r\right)  }%
}{k^{\overline{r}}}-\int_{1}^{n}\frac{h_{x}^{\left(  r\right)  }}%
{x^{\overline{r}}}dx\text{ \ and \ }b_{n}\left(  r\right)  =\sum_{k=1}%
^{n}\frac{h_{k}^{\left(  r\right)  }}{k^{\overline{r}}}-\int_{1}^{n}%
\frac{h_{x}^{\left(  r\right)  }}{x^{\overline{r}}}dx.
\]
These sequences satisfy the following properties.

\begin{theorem}
\label{mteo2}Let $r\in\left[  0,+\infty\right)  $. Then,

\noindent a) The sequences $\left\{  a_{n}\left(  r\right)  \right\}
_{n=1}^{\infty}$ and $\left\{  b_{n}\left(  r\right)  \right\}  _{n=1}%
^{\infty}$ satisfy
\[
0\leq a_{n}\left(  r\right)  <a_{n+1}\left(  r\right)  <\cdots<b_{n+1}\left(
r\right)  <b_{n}\left(  r\right)  \leq1/\Gamma\left(  r+1\right)  \text{, for
all }n\in\mathbb{N}\text{.}%
\]

\noindent b) The sequences $\left\{  a_{n}\left(  r\right)  \right\}
_{n=1}^{\infty}$ and $\left\{  b_{n}\left(  r\right)  \right\}  _{n=1}%
^{\infty}$ converge to the common limit denoted by $\overline{\gamma
}_{h^{\left(  r\right)  }}$, i.e.,%
\[
\overline{\gamma}_{h^{\left(  r\right)  }}=\lim_{n\rightarrow\infty}%
a_{n}\left(  r\right)  =\lim_{n\rightarrow\infty}b_{n}\left(  r\right)  .
\]

\noindent c) For each $n\in\mathbb{N}$, we have the following estimates:
\[
C\left(  n,r\right)  <\overline{\gamma}_{h^{\left(  r\right)  }}-a_{n}\left(
r\right)  <C\left(  n,r\right)  +\frac{h_{n+1}^{\left(  r\right)  }}{\left(
n+1\right)  ^{\overline{r}}}%
\]
\ and\
\[
D\left(  n,r\right)  -\frac{h_{n+1}^{\left(  r\right)  }}{\left(  n+1\right)
^{\overline{r}}}<b_{n}\left(  r\right)  -\overline{\gamma}_{h^{\left(
r\right)  }}<D\left(  n,r\right)  ,
\]
where $C\left(  n,r\right)  =h_{n}^{\left(  r\right)  }/n^{\overline{r}}%
-\int_{n}^{n+1}\left(  h_{x}^{\left(  r\right)  }/x^{\overline{r}}\right)  dx$
and $D\left(  n,r\right)  =\int_{n}^{n+1}\left(  h_{x}^{\left(  r\right)
}/x^{\overline{r}}\right)  dx$.

\noindent d) For $r\geq1$ we have $\overline{\gamma}_{h^{\left(  r\right)  }%
}>\overline{\gamma}_{h^{\left(  r+1\right)  }}$.
\end{theorem}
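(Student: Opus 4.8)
The plan is to treat parts (a)--(c) exactly as in Theorem \ref{mteo1}, the only change being that the relevant function is now
\[
\varphi_r(x)=\frac{h_x^{(r)}}{x^{\overline r}}=\frac{\psi(x+r)-\psi(r)}{x\,\Gamma(r)},
\]
the second equality coming from \eqref{2}. First I would record the three analytic facts that drive the whole argument: $\varphi_r(x)>0$ for $x>0$ (because $\psi$ is increasing), $\varphi_r$ is strictly decreasing on $(0,\infty)$, and $\varphi_r(x)\to0$ as $x\to\infty$. The monotonicity is the only nontrivial point; it follows from writing $\psi(x+r)-\psi(r)=\int_0^x\psi'(r+s)\,ds$ and using that $\psi'$ is strictly decreasing, which gives $\psi(x+r)-\psi(r)>x\,\psi'(x+r)$ and hence $\varphi_r'(x)<0$. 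Granting these, the standard Mascheroni bookkeeping applies with no change: $a_{n+1}(r)-a_n(r)=\varphi_r(n)-\int_n^{n+1}\varphi_r>0$ and $b_{n+1}(r)-b_n(r)=\varphi_r(n+1)-\int_n^{n+1}\varphi_r<0$ give the monotonicities, $b_n(r)-a_n(r)=\varphi_r(n)\to0$ gives the nesting and the common limit, the endpoint values $a_1(r)=0$ and $b_1(r)=h_1^{(r)}/1^{\overline r}=1/\Gamma(r+1)$ give the bounds in (a), and isolating the first omitted term and telescoping the tail yields the estimates in (c).

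For part (d) I would first turn $\overline\gamma_{h^{(r)}}$ into a manifestly positive series. Telescoping the integral gives $a_n(r)=\sum_{k=1}^{n-1}g_k(r)$ with
\[
g_k(r)=\varphi_r(k)-\int_k^{k+1}\varphi_r(x)\,dx=\int_k^{k+1}\bigl(-\varphi_r'(u)\bigr)(k+1-u)\,du\ge0,
\]
the last equality by Fubini, so that $\overline\gamma_{h^{(r)}}=\sum_{k=1}^\infty g_k(r)$. Subtracting the analogous representation for $r+1$ term by term reduces the whole question to controlling the sign of $\varphi_{r+1}'(u)-\varphi_r'(u)$, i.e.\ to the monotonicity in $u$ of the difference $D(u):=\varphi_r(u)-\varphi_{r+1}(u)$.

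Here the recurrences $\psi(y+1)=\psi(y)+1/y$ and $\Gamma(r+1)=r\Gamma(r)$ do the work: a short computation gives
\[
D(u)=\frac{r-1}{r}\,\varphi_r(u)+\frac{1}{r^2\,\Gamma(r)\,(u+r)}.
\]
For $r\ge1$ the coefficient $(r-1)/r$ is nonnegative, so $D$ is a nonnegative combination of the two positive functions $\varphi_r(u)$ and $1/(u+r)$, the second of which is strictly decreasing; hence $D$ is itself strictly decreasing and $D'(u)<0$. Consequently
\[
\overline\gamma_{h^{(r)}}-\overline\gamma_{h^{(r+1)}}=\sum_{k=1}^\infty\int_k^{k+1}\bigl(-D'(u)\bigr)(k+1-u)\,du>0,
\]
which is the assertion.

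The main obstacle is conceptual rather than computational: the naive pointwise inequality $\varphi_r(u)>\varphi_{r+1}(u)$ (which the same formula for $D$ already exhibits) is \emph{not} enough, because $\overline\gamma_{h^{(r)}}$ is a difference of a sum and an integral and is insensitive to a uniform downward shift of $\varphi_r$. What is actually needed is that the gap $D(u)$ decreases in $u$, so that its negative derivative pairs against the nonnegative weight $k+1-u$ with a definite sign; and it is precisely at this step that the hypothesis $r\ge1$ enters, through the nonnegativity of $(r-1)/r$. For $0<r<1$ this coefficient is negative and $D$ need no longer be monotone, so the method---and plausibly the inequality itself---breaks down.
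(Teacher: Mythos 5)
Your proposal is correct and follows essentially the same route as the paper: the monotonicity of $h_x^{(r)}/x^{\overline{r}}=(\psi(x+r)-\psi(r))/(x\Gamma(r))$ that drives (a)--(c) is exactly Lemma \ref{soru} (which the paper proves via the auxiliary function $\beta$ and you prove via the integral representation of $\psi(x+r)-\psi(r)$), and part (d) rests on the identical decomposition $\varphi_r(u)-\varphi_{r+1}(u)=\frac{r-1}{r}\varphi_r(u)+\frac{1}{r^{2}\Gamma(r)(u+r)}$ being decreasing precisely when $r\geq1$. The one genuine (if minor) improvement is your Fubini/telescoping representation of $\overline{\gamma}_{h^{(r)}}-\overline{\gamma}_{h^{(r+1)}}$ as a sum of strictly positive terms, which delivers the strict inequality of (d) directly, whereas the paper's upper-Darboux-sum comparison, as literally written, only concludes $\leq$ after passing to the limit.
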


In Section 2 we prove these results. In Section 3 we establish formulas to
calculate $\gamma_{h^{\left(  r\right)  }}$ and $\overline{\gamma}_{h^{\left(
r\right)  }}$ (see Theorem \ref{teo1} and Theorem \ref{teo1-}). As a
demonstration, we list the first few values of $\gamma_{h^{\left(  r\right)
}}$ and $\overline{\gamma}_{h^{\left(  r\right)  }}$ here:\label{liste}%
\[
\hspace{-0.09in}%
\begin{tabular}
[c]{ll}%
$\gamma_{h^{\left(  0\right)  }}=\gamma\simeq0.577\,215\,664\,9,$ &
$\overline{\gamma}_{h^{\left(  0\right)  }}=\gamma\simeq0.577\,215\,664\,9,$%
\smallskip\\
$\gamma_{h^{\left(  1\right)  }}\simeq0.529\,052\,969\,9,$ & $\overline
{\gamma}_{h^{\left(  1\right)  }}=\gamma_{h^{\left(  1\right)  }}%
\simeq0.529\,052\,969\,9,$\smallskip\\
$\gamma_{h^{\left(  2\right)  }}\simeq0.555\,196\,054\,9,$ & $\overline
{\gamma}_{h^{\left(  2\right)  }}\simeq0.258\,690\,124\,4,$\smallskip\\
$\gamma_{h^{\left(  3\right)  }}\simeq0.597\,861\,674\,3,$ & $\overline
{\gamma}_{h^{\left(  3\right)  }}\simeq0.085\,388\,073\,83,$\smallskip\\
$\gamma_{h^{\left(  4\right)  }}\simeq0.643\,901\,835\,0,$ & $\overline
{\gamma}_{h^{\left(  4\right)  }}\simeq0.021\,230\,652\,79,$\smallskip\\
$\gamma_{h^{\left(  5\right)  }}\simeq0.688\,191\,320\,8,$ & $\overline
{\gamma}_{h^{\left(  5\right)  }}\simeq0.004\,231\,410\,657,$\smallskip\\
$\gamma_{h^{\left(  6\right)  }}\simeq0.728\,430\,828\,1,$ & $\overline
{\gamma}_{h^{\left(  6\right)  }}\simeq0.000\,703\,545\,796,$\smallskip\\
$\gamma_{h^{\left(  7\right)  }}\simeq0.763\,731\,244\,8,$ & $\overline
{\gamma}_{h^{\left(  7\right)  }}\simeq0.000\,100\,330\,361,$\smallskip\\
$\gamma_{h^{\left(  8\right)  }}\simeq0.793\,996\,044\,8,$ & $\overline
{\gamma}_{h^{\left(  8\right)  }}\simeq1.\,252\,451\,689\times10^{-5}%
,$\smallskip\\
$\gamma_{h^{\left(  9\right)  }}\simeq0.819\,563\,050\,9.$ & $\overline
{\gamma}_{h^{\left(  9\right)  }}\simeq1.\,390\,145\,776\times10^{-6}.$%
\end{tabular}
\ \ \ \ \ \
\]

We finally note that since%
\[
\lim_{r\rightarrow0}h_{x}^{\left(  r\right)  }=\lim_{r\rightarrow0}%
\frac{x^{\overline{r}}}{x\Gamma\left(  r\right)  }\left(  \psi\left(
x+r\right)  -\psi\left(  r\right)  \right)  =-\frac{1}{x}\lim_{r\rightarrow
0}\frac{\psi\left(  r\right)  }{\Gamma\left(  r\right)  }=\frac{1}{x},
\]
we use $h_{x}^{\left(  0\right)  }=1/x$ in the sequel.

\section{Proofs of Theorems}

For the proofs of Theorem \ref{mteo1} and Theorem \ref{mteo2} we need the
following lemma.

\begin{lemma}
\label{soru}Let $r>0$ and $0<x<y$. Then,%
\[
\frac{\psi\left(  y+r\right)  -\psi\left(  r\right)  }{\psi\left(  x+r\right)
-\psi\left(  r\right)  }<\frac{y}{x}.
\]

\end{lemma}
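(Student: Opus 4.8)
The plan is to recognize the claimed inequality as a statement about the monotonicity of a certain secant slope. Set $g(t) = \psi(t+r) - \psi(r)$ for $t > 0$. Since the trigamma function $\psi'(x) = \sum_{n \geq 0} (x+n)^{-2}$ is strictly positive on $(0,\infty)$, the digamma function $\psi$ is strictly increasing, so $g(x) > g(0) = 0$ for every $x > 0$. Consequently both sides of the asserted inequality are positive, and after clearing the (positive) denominators the claim $g(y)/g(x) < y/x$ becomes equivalent to $x\,g(y) < y\,g(x)$, i.e. to
\[
\frac{g(y)}{y} < \frac{g(x)}{x} \qquad (0 < x < y).
\]
Thus it suffices to show that $t \mapsto g(t)/t$ is strictly decreasing on $(0,\infty)$.

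Next I would exploit the concavity of $\psi$. Differentiating the series once more gives $\psi''(x) = -2\sum_{n \geq 0}(x+n)^{-3} < 0$ on $(0,\infty)$, so $\psi$ is strictly concave there; since $g$ differs from $\psi$ only by a horizontal and a vertical shift, $g$ is strictly concave on $(0,\infty)$ as well, with $g(0) = 0$. For such a function the secant slope through the origin is strictly decreasing: writing $x$ as the convex combination $x = \tfrac{x}{y}\,y + \bigl(1-\tfrac{x}{y}\bigr)\cdot 0$ and applying strict concavity,
\[
g(x) > \frac{x}{y}\,g(y) + \left(1 - \frac{x}{y}\right) g(0) = \frac{x}{y}\,g(y),
\]
which is precisely $g(x)/x > g(y)/y$. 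This yields the lemma.

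Alternatively, one could bypass the convex-combination step and argue directly that $\frac{d}{dt}\bigl(g(t)/t\bigr) = \bigl(t\,g'(t) - g(t)\bigr)/t^2 < 0$: here $g'(t) = \psi'(t+r)$ and $g(t) = \int_0^t \psi'(s+r)\,ds$, so the required inequality $t\,\psi'(t+r) < \int_0^t \psi'(s+r)\,ds$ follows from the strict monotonic decrease of the trigamma function, which forces $\psi'(s+r) > \psi'(t+r)$ for all $s \in (0,t)$.

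I do not anticipate a serious obstacle here; the only two points requiring care are verifying that $g(x) > 0$ so that cross-multiplication preserves the direction of the inequality, and invoking the correct sign of $\psi''$ to secure strict (rather than weak) concavity, which is what upgrades the conclusion from $\leq$ to the strict $<$ demanded by the statement.
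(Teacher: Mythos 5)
Your proof is correct and follows essentially the same route as the paper: both reduce the claim to showing that $x \mapsto (\psi(x+r)-\psi(r))/x$ is strictly decreasing, and your alternative derivative-based argument (that $t\,\psi'(t+r) < \int_0^t \psi'(s+r)\,ds$, forced by the sign of $\psi''$) is in substance the paper's own computation with its auxiliary function $\beta(x,r)=x\psi'(x+r)-\psi(x+r)+\psi(r)$. Your primary phrasing via strict concavity and a convex combination is a slightly cleaner packaging of the same fact.
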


\begin{proof}
We start by setting%
\[
\alpha\left(  x,r\right)  =\frac{\psi\left(  x+r\right)  -\psi\left(
r\right)  }{x}%
\]
for $r>0,$ $x>0$. We show that $\alpha\left(  x,r\right)  $ is a decreasing
function with respect to $x$. Differentiating with respect to $x$ gives%
\[
\frac{d}{dx}\alpha\left(  x,r\right)  =\frac{x\frac{d}{dx}\psi\left(
x+r\right)  -\psi\left(  x+r\right)  +\psi\left(  r\right)  }{x^{2}}.
\]
Let $\beta\left(  x,r\right)  =x\frac{d}{dx}\psi\left(  x+r\right)
-\psi\left(  x+r\right)  +\psi\left(  r\right)  $.\ Then,
\[
\frac{d}{dx}\beta\left(  x,r\right)  =x\psi^{\prime\prime}\left(  x+r\right)
<0
\]
implies that the function $\beta\left(  x,r\right)  $ is decreasing with
respect to $x$. Since
\[
\lim_{x\rightarrow0^{+}}\beta\left(  x,r\right)  =\lim_{x\rightarrow0^{+}%
}\left(  x\frac{d}{dx}\psi\left(  x+r\right)  -\psi\left(  x+r\right)
+\psi\left(  r\right)  \right)  =0
\]
and $\beta\left(  x,r\right)  $ is decreasing, we conclude that
\[
\beta\left(  x,r\right)  =x\frac{d}{dx}\psi\left(  x+r\right)  -\psi\left(
x+r\right)  +\psi\left(  r\right)  <0.
\]
This shows that the function $\alpha\left(  x,r\right)  $ is decreasing with
respect to $x$, which completes the proof.
\end{proof}

\subsection{Proof of Theorem \ref{mteo1}}

a) We first show that the sequence $\left\{  y_{n}\left(  r\right)  \right\}
_{n=1}^{\infty}$ is decreasing. From the definition, we have%
\[
y_{n+1}\left(  r\right)  -y_{n}\left(  r\right)  =\frac{h_{n+1}^{\left(
r\right)  }}{\left(  n+1\right)  ^{r}}-\int_{n}^{n+1}\frac{h_{x}^{\left(
r\right)  }}{x^{r}}dx.
\]
The mean value theorem for definite integrals guarantees that there is an
$N\in\left(  n,n+1\right)  $ such that
\[
\int_{n}^{n+1}\frac{h_{x}^{\left(  r\right)  }}{x^{r}}dx=\frac{h_{N}^{\left(
r\right)  }}{N^{r}}%
\]
holds. Then, using (\ref{2}) it is seen that
\begin{align*}
&  y_{n+1}\left(  r\right)  -y_{n}\left(  r\right)  =\frac{h_{n+1}^{\left(
r\right)  }}{\left(  n+1\right)  ^{r}}-\frac{h_{N}^{\left(  r\right)  }}%
{N^{r}}\\
&  =\frac{N^{r+1}\left(  n+1\right)  ^{\overline{r}}\left(  \psi\left(
n+1+r\right)  -\psi\left(  r\right)  \right)  -\left(  n+1\right)
^{r+1}N^{\overline{r}}\left(  \psi\left(  N+r\right)  -\psi\left(  r\right)
\right)  }{\Gamma\left(  r\right)  \left(  n+1\right)  ^{r+1}N^{r+1}}.
\end{align*}
For $r=0,$ the inequality $y_{n+1}\left(  0\right)  -y_{n}\left(  0\right)
<0$\ is trivial since $h_{x}^{\left(  0\right)  }=1/x.$ Let $r>0.$ Then,
\[
y_{n+1}\left(  r\right)  -y_{n}\left(  r\right)  <0\Leftrightarrow\frac
{\psi\left(  n+1+r\right)  -\psi\left(  r\right)  }{\psi\left(  N+r\right)
-\psi\left(  r\right)  }<\frac{n+1}{N}\frac{\left(  n+1\right)  ^{r}%
N^{\overline{r}}}{N^{r}\left(  n+1\right)  ^{\overline{r}}}.
\]
According to the Lemma \ref{soru} it remains to prove that
\begin{equation}
1<\frac{\left(  n+1\right)  ^{r}N^{\overline{r}}}{N^{r}\left(  n+1\right)
^{\overline{r}}}\text{ for }r>0\text{ and }n<N<n+1. \label{6}%
\end{equation}
For this purpose, we set
\[
g\left(  x,r\right)  =\frac{\Gamma\left(  x+r\right)  }{x^{r}\Gamma\left(
x\right)  }=\frac{x^{\overline{r}}}{x^{r}}\text{ for }r>0,x>0.
\]
Differentiating with respect to $x$ gives
\[
\frac{d}{dx}g\left(  x,r\right)  =\frac{\Gamma\left(  x+r\right)  }%
{\Gamma\left(  x\right)  x^{r}}\left(  \psi\left(  x+r\right)  -\psi\left(
x\right)  -\frac{r}{x}\right)  .
\]
The substitutions $x\rightarrow1,$ $r\rightarrow x$ and $y\rightarrow r$ in
Lemma \ref{soru} yields%
\[
\frac{\psi\left(  r+x\right)  -\psi\left(  x\right)  }{r}<\frac{1}{x}.
\]
Thus, we deduce that $g\left(  x,r\right)  =\Gamma\left(  x+r\right)  /\left(
x^{r}\Gamma\left(  x\right)  \right)  \ $is decreasing with respect to $x$ for
$x,r>0.$ This completes the proof of $y_{n+1}\left(  r\right)  <y_{n}\left(
r\right)  $.

To show that the sequence $\left\{  z_{n}\left(  r\right)  \right\}
_{n=1}^{\infty}$ is increasing we again appeal to the mean value theorem for
definite integrals and deduce that
\begin{align*}
z_{n+1}\left(  r\right)  -z_{n}\left(  r\right)   &  =\frac{h_{n}^{\left(
r\right)  }}{n^{r}}-\frac{h_{N}^{\left(  r\right)  }}{N^{r}},\text{ }%
N\in\left(  n,n+1\right) \\
&  =\frac{N^{r+1}n^{\overline{r}}\left(  \psi\left(  n+r\right)  -\psi\left(
r\right)  \right)  -n^{r+1}N^{\overline{r}}\left(  \psi\left(  N+r\right)
-\psi\left(  r\right)  \right)  }{\Gamma\left(  r\right)  n^{r+1}N^{r+1}}.
\end{align*}
Since $h_{x}^{\left(  0\right)  }=1/x$ we have $z_{n+1}\left(  r\right)
-z_{n}\left(  r\right)  <0$\ for $r=0$. Let $r>0.$ Then,
\[
z_{n+1}\left(  r\right)  -z_{n}\left(  r\right)  >0\Leftrightarrow\frac
{\psi\left(  n+r\right)  -\psi\left(  r\right)  }{\psi\left(  N+r\right)
-\psi\left(  r\right)  }>\frac{n}{N}\frac{n^{r}N^{\overline{r}}}%
{N^{r}n^{\overline{r}}}.
\]
It is seen from Lemma \ref{soru} and (\ref{6}) that the sequence $\left\{
z_{n}\left(  r\right)  \right\}  _{n=1}^{\infty}$ is increasing.

Now, above facts and
\[
y_{n}\left(  r\right)  -z_{n}\left(  r\right)  =\frac{h_{n}^{\left(  r\right)
}}{n^{r}}>0,\text{ for each }n\in\mathbb{N}%
\]
imply that%
\[
z_{1}\left(  r\right)  <\cdots<z_{n}\left(  r\right)  <z_{n+1}\left(
r\right)  <\cdots<y_{n+1}\left(  r\right)  <y_{n}\left(  r\right)
<\cdots<y_{1}\left(  r\right)  .
\]

b) The sequence $\left\{  z_{n}\left(  r\right)  \right\}  _{n=1}^{\infty}$ is
increasing and bounded above, hence the limit $\lim\limits_{n\rightarrow
\infty}z_{n}\left(  r\right)  $ exists. Similarly the limit $\lim
\limits_{n\rightarrow\infty}y_{n}\left(  r\right)  $ exists. We observe that%
\[
y_{n}\left(  r\right)  -z_{n}\left(  r\right)  =\frac{h_{n}^{\left(  r\right)
}}{n^{r}}\sim\frac{1}{n^{r}}\frac{n^{r-1}\ln n}{\Gamma\left(  r\right)
}\rightarrow0,n\rightarrow\infty,
\]
so $\lim\limits_{n\rightarrow\infty}z_{n}\left(  r\right)  =\lim
\limits_{n\rightarrow\infty}y_{n}\left(  r\right)  $. Thus,%
\begin{equation}
0=z_{1}\left(  r\right)  <\cdots<z_{n}\left(  r\right)  <\cdots<\gamma
_{h^{\left(  r\right)  }}<\cdots<y_{n}\left(  r\right)  <\cdots<y_{1}\left(
r\right)  =1. \label{5}%
\end{equation}

c) The inequalities can be easily seen from (\ref{5}).

\subsection{Proof of Theorem \ref{mteo2}}

We sketch the proof which is similar to the proof of Theorem \ref{mteo1}.

a) From the definition of $\left\{  b_{n}\left(  r\right)  \right\}
_{n=1}^{\infty}$ and the mean value theorem for definite integrals, it is seen
that%
\[
b_{n+1}\left(  r\right)  -b_{n}\left(  r\right)  =\frac{h_{n+1}^{\left(
r\right)  }}{\left(  n+1\right)  ^{\overline{r}}}-\frac{h_{N}^{\left(
r\right)  }}{N^{\overline{r}}},\text{ }N\in\left(  n,n+1\right)  .
\]
Thus, the validity of $b_{n+1}\left(  r\right)  <b_{n}\left(  r\right)  $
follows from (\ref{2}) and Lemma \ref{soru} when $r>0,$ and from
$h_{x}^{\left(  0\right)  }=1/x$ when $r=0$. In a similar way it can be seen
that the sequence $\left\{  a_{n}\left(  r\right)  \right\}  _{n=1}^{\infty}$
is increasing, and%
\[
a_{1}\left(  r\right)  <\cdots<a_{n}\left(  r\right)  <a_{n+1}\left(
r\right)  <\cdots<b_{n+1}\left(  r\right)  <b_{n}\left(  r\right)
<\cdots<b_{1}\left(  r\right)  .
\]

b) The sequence $\left\{  a_{n}\left(  r\right)  \right\}  _{n=1}^{\infty}$ is
increasing and bounded above, hence the limit $\lim\limits_{n\rightarrow
\infty}a_{n}\left(  r\right)  $ exists. Similarly the limit $\lim
\limits_{n\rightarrow\infty}b_{n}\left(  r\right)  $ exists. We observe that
\[
b_{n}\left(  r\right)  -a_{n}\left(  r\right)  =\frac{h_{n}^{\left(  r\right)
}}{n^{\overline{r}}}\sim\frac{\ln\left(  n+r\right)  }{n\Gamma\left(
r\right)  }\rightarrow0,n\rightarrow\infty,
\]
so%
\[
\lim_{n\rightarrow\infty}a_{n}\left(  r\right)  =\lim_{n\rightarrow\infty
}b_{n}\left(  r\right)  =\overline{\gamma}_{h^{\left(  r\right)  }}.
\]
Thus,%
\begin{equation}
0=a_{1}\left(  r\right)  <\cdots<a_{n}\left(  r\right)  <\cdots<\overline
{\gamma}_{h^{\left(  r\right)  }}<\cdots<b_{n}\left(  r\right)  <\cdots
<b_{1}\left(  r\right)  =\frac{1}{1^{\overline{r}}}. \label{12}%
\end{equation}

c) The estimates follow from (\ref{12}).

d) By the definition, we have%
\[
a_{n}\left(  r+1\right)  -a_{n}\left(  r\right)  =\int_{1}^{n}f\left(
x,r\right)  dx-\sum_{k=1}^{n-1}f\left(  k,r\right)  ,
\]
where%
\[
f\left(  x,r\right)  =\frac{h_{x}^{\left(  r\right)  }}{x^{\overline{r}}%
}-\frac{h_{x}^{\left(  r+1\right)  }}{x^{\overline{r+1}}}.
\]
We would like to show that $a_{n}\left(  r+1\right)  -a_{n}\left(  r\right)
<0.$ For a decreasing function $f,$ the sum $\sum_{k=1}^{n-1}f\left(
k,r\right)  $ is greater than the integral $\int_{1}^{n}f\left(  x,r\right)
dx$ because $\sum_{k=1}^{n-1}f\left(  k,r\right)  $ corresponds to the upper
Darboux sum of $f$ with respect to the partition $P=\left\{  x_{k}%
=k:k=1,2,\ldots,n-1\right\}  .$ Therefore, we need to show that $f\left(
x,r\right)  $ is a decreasing function with respect to $x$.

Let $y>x>0.$ Using (\ref{2}) we have
\begin{align*}
f\left(  y,r\right)  -f\left(  x,r\right)   &  =\left(  \frac{\psi\left(
y+r\right)  -\psi\left(  r\right)  }{y}-\frac{\psi\left(  x+r\right)
-\psi\left(  r\right)  }{x}\right)  \frac{\left(  1-\frac{1}{r}\right)
}{\Gamma\left(  r\right)  }\\
&  +\frac{1}{r\Gamma\left(  r+1\right)  }\frac{x-y}{\left(  y+r\right)
\left(  x+r\right)  }.
\end{align*}
Thus, Lemma \ref{soru} and the fact that
\[
\frac{x-y}{\left(  y+r\right)  \left(  x+r\right)  }<0
\]
imply $f\left(  y,r\right)  -f\left(  x,r\right)  <0,$ i.e, the function
$f\left(  x,r\right)  $ is decreasing.

Since $\overline{\gamma}_{h^{\left(  r\right)  }}=\lim_{n\rightarrow\infty
}a_{n}\left(  r\right)  $, we immediately conclude that
\[
\lim_{n\rightarrow\infty}a_{n}\left(  r+1\right)  =\overline{\gamma
}_{h^{\left(  r+1\right)  }}\leq\overline{\gamma}_{h^{\left(  r\right)  }%
}=\lim_{n\rightarrow\infty}a_{n}\left(  r\right)  ,
\]
or equivalently%
\[
\overline{\gamma}_{h^{\left(  1\right)  }}\geq\cdots\geq\overline{\gamma
}_{h^{\left(  r\right)  }}\geq\overline{\gamma}_{h^{\left(  r+1\right)  }}%
\geq\cdots\geq0.
\]

\section{On evaluations of $\gamma_{h^{\left(  r\right)  }}$\ and
$\overline{\gamma}_{h^{\left(  r\right)  }}$}

In this section, we give representations for the constants $\gamma_{h^{\left(
r\right)  }}$\ and $\overline{\gamma}_{h^{\left(  r\right)  }}$\ when $r$ is a
non-negative integer. We recall that the Stirling numbers of the first kind $%
\genfrac{[}{]}{0pt}{0}{r}{j}%
$ are defined by%
\begin{equation}
x^{\overline{r}}=%
{\displaystyle\sum_{j=0}^{r}}
\genfrac{[}{]}{0pt}{0}{r}{j}%
x^{j}, \label{3}%
\end{equation}
and the generalized harmonic numbers $H_{n}^{\left(  r\right)  }$ are defined
by
\[
H_{n}^{\left(  r\right)  }\mathbf{=}\sum_{k=1}^{n}\frac{1}{k^{r}}.
\]
Note that throughout this section, an empty sum is assumed to be zero.

\subsection{The constants $\gamma_{h^{\left(  r\right)  }}$}

We first analyze the sum and integral in the definition of $y_{n}\left(
r\right)  $, respectively. The aforementioned sum may be evaluated as follows:

\begin{lemma}
\label{Top}Let $r$ be a non-negative integer. Then,%
\begin{align}
\Gamma\left(  r\right)  \sum_{k=1}^{n}\frac{h_{k}^{\left(  r\right)  }}%
{k^{r}}  &  =\frac{\left(  H_{n}\right)  ^{2}+H_{n}^{\left(  2\right)  }}%
{2}-\left(  \psi\left(  r\right)  +\gamma\right)  H_{n}\nonumber\\
&  +\sum_{j=0}^{r-1}%
\genfrac{[}{]}{0pt}{}{r}{j}%
\left(  \sum_{k=1}^{n}\frac{H_{k+r-1}}{k^{r+1-j}}-\left(  \psi\left(
r\right)  +\gamma\right)  H_{n}^{\left(  r+1-j\right)  }\right) \nonumber\\
&  +\sum_{j=1}^{r-1}\frac{H_{j}}{j}-H_{r-1}\left(  H_{n+r-1}-H_{n}\right)
+\sum_{j=1}^{r-1}\frac{H_{j-1}}{j+n}. \label{top}%
\end{align}

\end{lemma}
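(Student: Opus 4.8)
The plan is to reduce the whole statement to the explicit representation of the hyperharmonic numbers at integer arguments and then manipulate finite harmonic sums. First I would record the closed form
\[
h_k^{(r)}=\frac{k^{\overline{r}}}{k\,\Gamma(r)}\left(H_{k+r-1}-H_{r-1}\right),
\]
which is exactly (\ref{2}) specialized to integer arguments via $\psi(m+r)-\psi(r)=H_{m+r-1}-H_{r-1}$; in particular $\psi(r)+\gamma=H_{r-1}$, a substitution I will use throughout to recognize the right-hand side of (\ref{top}). Dividing by $k^r$ and inserting the Stirling expansion (\ref{3}) in the form $k^{\overline{r}}/k^{r+1}=\sum_{j=0}^{r}\genfrac{[}{]}{0pt}{}{r}{j}k^{-(r+1-j)}$, then interchanging the two finite sums, gives
\[
\Gamma(r)\sum_{k=1}^{n}\frac{h_k^{(r)}}{k^r}=\sum_{j=0}^{r}\genfrac{[}{]}{0pt}{}{r}{j}\sum_{k=1}^{n}\frac{H_{k+r-1}-H_{r-1}}{k^{r+1-j}}.
\]

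Next I would split off the top term. For $0\le j\le r-1$, separating the $H_{r-1}$ part as $H_{r-1}H_n^{(r+1-j)}=(\psi(r)+\gamma)H_n^{(r+1-j)}$ reproduces verbatim the second line of (\ref{top}). Since $\genfrac{[}{]}{0pt}{}{r}{r}=1$, the entire problem therefore collapses to evaluating the single $j=r$ term $\sum_{k=1}^{n}(H_{k+r-1}-H_{r-1})/k$. To handle it I would write $H_{k+r-1}=H_k+\sum_{i=1}^{r-1}\frac{1}{k+i}$, so that $\sum_{k}H_k/k$ contributes the classical closed form $\tfrac12\bigl((H_n)^2+H_n^{(2)}\bigr)$ (itself provable by the symmetry of $\sum_{1\le m\le k\le n}1/(km)$), while the $-H_{r-1}H_n$ piece cancels the $-(\psi(r)+\gamma)H_n$ on the right. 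What is left is the double sum $\sum_{k=1}^{n}\frac{1}{k}\sum_{i=1}^{r-1}\frac{1}{k+i}$, which I would treat via the partial fraction $1/(k(k+i))=\tfrac1i\bigl(1/k-1/(k+i)\bigr)$, yielding $\sum_{i=1}^{r-1}\tfrac1i\bigl(H_i-(H_{n+i}-H_n)\bigr)$; its first half is precisely $\sum_{j=1}^{r-1}H_j/j$.

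The main obstacle—and really the only nonroutine bookkeeping—is converting the leftover $\sum_{i=1}^{r-1}\tfrac1i(H_{n+i}-H_n)$ into the last two terms of (\ref{top}). Here I would write $H_{n+i}-H_n=\sum_{m=1}^{i}1/(n+m)$ and swap the order of summation over the triangle $1\le m\le i\le r-1$; the inner sum $\sum_{i=m}^{r-1}1/i=H_{r-1}-H_{m-1}$ then produces exactly $H_{r-1}(H_{n+r-1}-H_n)-\sum_{j=1}^{r-1}H_{j-1}/(j+n)$, which matches the final two terms of (\ref{top}). Collecting the pieces and invoking $\psi(r)+\gamma=H_{r-1}$ once more completes the identity; I would also check the degenerate cases $r=1$ (where $\genfrac{[}{]}{0pt}{}{1}{0}=0$ and the empty sums vanish) to confirm the boundary conventions.
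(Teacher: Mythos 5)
Your proof is correct and takes essentially the same route as the paper's: expand $h_k^{(r)}$ via (\ref{2}) and the Stirling expansion (\ref{3}), peel off the $j=r$ term, and reduce $\sum_{k=1}^{n}H_{k+r-1}/k$ by partial fractions and a triangle swap to the paper's intermediate identity (\ref{top1}). The only cosmetic difference is that you re-derive $\sum_{k=1}^{n}H_k/k=\tfrac12\left((H_n)^2+H_n^{(2)}\right)$ by symmetry where the paper cites Adamchik.
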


\begin{proof}
Using (\ref{2}), (\ref{3}) and $\psi\left(  r\right)  +\gamma=H_{r-1}$ we have%
\[
h_{k}^{\left(  r\right)  }=\frac{1}{\Gamma\left(  r\right)  }\sum_{j=0}^{r}%
\genfrac{[}{]}{0pt}{}{r}{j}%
k^{j-1}\left(  H_{k+r-1}-\left(  \psi\left(  r\right)  +\gamma\right)
\right)  ,
\]
from which we obtain that%
\begin{align}
\Gamma\left(  r\right)  \sum_{k=1}^{n}\frac{h_{k}^{\left(  r\right)  }}%
{k^{r}}  &  =\sum_{j=0}^{r-1}%
\genfrac{[}{]}{0pt}{}{r}{j}%
\sum_{k=1}^{n}\frac{H_{k+r-1}}{k^{r+1-j}}+\sum_{k=1}^{n}\frac{H_{k+r-1}}%
{k}\nonumber\\
&  -\left(  \psi\left(  r\right)  +\gamma\right)  \sum_{j=0}^{r-1}%
\genfrac{[}{]}{0pt}{}{r}{j}%
H_{n}^{\left(  r+1-j\right)  }-\left(  \psi\left(  r\right)  +\gamma\right)
H_{n}. \label{4}%
\end{align}
It is easy to see that
\begin{equation}
\sum_{k=1}^{n}\frac{H_{k+r-1}}{k}=\sum_{k=1}^{n}\frac{H_{k}}{k}+\sum
_{j=1}^{r-1}\frac{H_{j}}{j}+H_{r-1}H_{n}-\sum_{j=1}^{r-1}\frac{H_{n+j}}%
{j}\nonumber
\end{equation}
and
\[
\sum_{j=1}^{r-1}\frac{H_{n+j}}{j}=H_{r-1}H_{n}-H_{r-1}H_{n}-H_{r-1}\left(
H_{n+r-1}-H_{n}\right)  +\sum_{j=1}^{r-1}\frac{H_{j-1}}{j+n}.
\]
Using these and the identity\ \cite[Lemma 1]{Ad}%
\[
\sum_{k=1}^{n}\frac{H_{k}}{k}=\frac{\left(  H_{n}\right)  ^{2}+H_{n}^{\left(
2\right)  }}{2},
\]
we deduce that%
\begin{equation}
\sum_{k=1}^{n}\frac{H_{k+r-1}}{k}=\frac{\left(  H_{n}\right)  ^{2}%
+H_{n}^{\left(  2\right)  }}{2}+\sum_{j=1}^{r-1}\left(  \frac{H_{j}}{j}%
+\frac{H_{j-1}}{j+n}\right)  -H_{r-1}\left(  H_{n+r-1}-H_{n}\right)  .
\label{top1}%
\end{equation}
Hence, (\ref{top}) follows from (\ref{4}) and (\ref{top1}).
\end{proof}

Now we evaluate the integral in the definition of $y_{n}\left(  r\right)  $.

\begin{lemma}
\label{Int}Let $r$ be a non-negative integer. Then,%
\begin{align}
\Gamma\left(  r\right)  \int_{1}^{n}\frac{h_{x}^{\left(  r\right)  }}{x^{r}%
}dx  &  =\frac{1}{2}\left(  \ln n\right)  ^{2}-\psi\left(  r\right)  \ln
n+\frac{\left(  n+1\right)  ^{\overline{r-1}}}{n^{r}}-r!+\left(  r+\frac{1}%
{2}\right)  \left(  1-\frac{1}{n}\right) \nonumber\\
&  +%
{\displaystyle\sum_{j=0}^{r-1}}
\genfrac{[}{]}{0pt}{0}{r}{j}%
\left\{  \frac{r}{r+1-j}\left(  1-\frac{1}{n^{r+1-j}}\right)  -\frac
{\psi\left(  r\right)  }{r-j}\left(  1-\frac{1}{n^{r-j}}\right)  \right\}
\nonumber\\
&  -\int_{0}^{\infty}\frac{\ln\left(  1+t^{2}\right)  -\ln\left(  1+\left(
t/n\right)  ^{2}\right)  }{\left(  e^{2\pi t}-1\right)  t}dt+%
{\displaystyle\sum_{j=0}^{r-1}}
\genfrac{[}{]}{0pt}{0}{r}{j}%
\int_{1}^{n}\frac{\psi\left(  x+1\right)  }{x^{r+1-j}}dx. \label{int}%
\end{align}

\end{lemma}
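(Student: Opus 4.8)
The goal is to evaluate the integral $\Gamma(r)\int_1^n h_x^{(r)}/x^r\,dx$ in closed form. My plan is to substitute the explicit polynomial-plus-digamma expansion of $h_x^{(r)}$ obtained from (\ref{2}), (\ref{3}) and the identity $\psi(r)+\gamma=H_{r-1}$, exactly as in the proof of Lemma \ref{Top}. Writing
\[
h_x^{(r)}=\frac{1}{\Gamma(r)}\sum_{j=0}^{r}\genfrac{[}{]}{0pt}{}{r}{j}x^{j-1}\bigl(\psi(x+r)-\psi(r)\bigr),
\]
and dividing by $x^r$, the integrand splits into a sum over $j$ of terms of the form $x^{j-1-r}\bigl(\psi(x+r)-\psi(r)\bigr)$. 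First I would separate the top term $j=r$, which contributes $x^{-1}\bigl(\psi(x+r)-\psi(r)\bigr)$, from the lower terms $0\le j\le r-1$, which contribute $x^{j-1-r}\bigl(\psi(x+r)-\psi(r)\bigr)$ with a genuinely negative power of $x$.

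Next I would reduce each $\psi(x+r)$ to $\psi(x+1)$ using the recurrence $\psi(x+r)=\psi(x+1)+\sum_{i=1}^{r-1}\frac{1}{x+i}$, so that every occurrence of the digamma function is pushed down to the single canonical function $\psi(x+1)$. The terms $-\psi(r)\,x^{j-1-r}$ integrate to elementary expressions, producing the $\frac{\psi(r)}{r-j}(1-n^{-(r-j)})$ contributions. The finite-sum correction $\sum_{i=1}^{r-1}1/(x+i)$ multiplied by the various powers of $x$ integrates via partial fractions into rational and logarithmic pieces; together with the integrals of the elementary powers $x^{j-1-r}$ these account for the $\frac{r}{r+1-j}(1-n^{-(r+1-j)})$ terms, the polynomial-in-$1/n$ contributions such as $(n+1)^{\overline{r-1}}/n^r-r!$ and $(r+\tfrac12)(1-1/n)$, and after careful bookkeeping the leading $\tfrac12(\ln n)^2-\psi(r)\ln n$ coming from the $j=r$ term's $\int_1^n \psi(x+1)/x\,dx$ behaviour. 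The only terms that cannot be integrated elementarily are precisely $\sum_{j=0}^{r-1}\genfrac{[}{]}{0pt}{0}{r}{j}\int_1^n \psi(x+1)/x^{r+1-j}\,dx$, which is why they are left intact in (\ref{int}).

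The remaining, and most delicate, ingredient is the handling of the $j=r$ term, i.e. $\int_1^n \frac{\psi(x+1)-\psi(r)}{x}\,dx$. For this I would invoke Binet's integral representation $\psi(x+1)=\ln x+\frac{1}{2x}-2\int_0^\infty \frac{t\,dt}{(t^2+x^2)(e^{2\pi t}-1)}$. The $\ln x/x$ piece integrates to $\tfrac12(\ln n)^2$, and the $\tfrac{1}{2x}\cdot\tfrac{1}{x}$-type pieces feed into the $(r+\tfrac12)(1-1/n)$ and related rational terms after combining with the $j<r$ contributions. The Binet tail, inserted into $\int_1^n\frac{1}{x}\,dx$ and with the order of integration in $x$ and $t$ exchanged (justified by absolute convergence since the integrand decays like $e^{-2\pi t}$), yields after the inner $x$-integration exactly the Frullani-type expression $-\int_0^\infty\frac{\ln(1+t^2)-\ln(1+(t/n)^2)}{(e^{2\pi t}-1)t}\,dt$.

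I expect the main obstacle to be the bookkeeping: correctly tracking which $\frac{1}{r-j}$ and $\frac{1}{r+1-j}$ denominators arise, reconciling the partial-fraction output of $\sum_{i=1}^{r-1}1/(x+i)$ against powers $x^{j-1-r}$ into the compact factorial-rising expression $(n+1)^{\overline{r-1}}/n^r-r!$, and verifying the Fubini interchange that converts the Binet tail into the stated Frullani integral. The analytic steps are standard once set up; the challenge is purely the careful, error-free consolidation of many elementary antiderivatives into the clean form of (\ref{int}).
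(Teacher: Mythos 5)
Your proposal is correct and follows essentially the same route as the paper: expand $h_x^{(r)}$ via (\ref{2}) and (\ref{3}), reduce $\psi(x+r)$ to $\psi(x+1)$, apply the representation $\psi(z)=\ln z-\frac{1}{2z}-2\int_0^\infty\frac{t\,dt}{(e^{2\pi t}-1)(t^2+z^2)}$ to $\int_1^n\psi(x+1)x^{-1}dx$ and interchange the order of integration to obtain the Frullani-type $t$-integral, leaving the $\int_1^n\psi(x+1)x^{-(r+1-j)}dx$ intact. The only organizational difference is that the paper writes $\frac{x^{\overline{r}}}{x^{r+1}}\left(\psi(x+r)-\psi(x+1)\right)=\frac{1}{x^{r}}\frac{d}{dx}(x+1)^{\overline{r-1}}$, a negative-power polynomial that integrates by parts directly into $(n+1)^{\overline{r-1}}/n^{r}-r!$ plus the $\frac{r}{r+1-j}$ terms, which sidesteps the term-by-term partial-fraction and log-cancellation bookkeeping you anticipate as the main obstacle.
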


\begin{proof}
Using (\ref{2}) and the well-known identity $\psi\left(  x+1\right)
=\psi\left(  x\right)  +1/x$ we find that
\begin{align}
\Gamma\left(  r\right)  \int_{1}^{n}\frac{h_{x}^{\left(  r\right)  }}{x^{r}%
}dx  &  =\int_{1}^{n}\frac{x^{\overline{r}}}{x^{r+1}}\psi\left(  x+1\right)
dx\nonumber\\
&  +\int_{1}^{n}\left(  \frac{1}{x^{r}}\frac{d}{dx}\left(  x+1\right)
^{\overline{r-1}}-\frac{x^{\overline{r}}}{x^{r+1}}\psi\left(  r\right)
\right)  dx.\nonumber
\end{align}
We now consider each of the integrals on the right hand side one by one.

In view of (\ref{3})\ we have\
\[
\int_{1}^{n}\frac{x^{\overline{r}}}{x^{r+1}}\psi\left(  x+1\right)
dx=\int_{1}^{n}\frac{\psi\left(  x+1\right)  }{x}dx+%
{\displaystyle\sum_{j=0}^{r-1}}
\genfrac{[}{]}{0pt}{0}{r}{j}%
\int_{1}^{n}\frac{\psi\left(  x+1\right)  }{x^{r+1-j}}dx.
\]
Thanks to the expression \cite[Eq. 6.3.21]{AS}
\[
\psi\left(  z\right)  =\ln z-\frac{1}{2z}-2\int_{0}^{\infty}\frac{t}{\left(
e^{2\pi t}-1\right)  \left(  t^{2}+z^{2}\right)  }dt,\text{ }\left\vert \arg
z\right\vert <\frac{\pi}{2},
\]
we find
\[
\int_{1}^{n}\frac{\psi\left(  x+1\right)  }{x}dx=\frac{1}{2}\left(  \ln
n\right)  ^{2}-\frac{1}{2n}+\frac{1}{2}-2\int_{0}^{\infty}\int_{1}^{n}\frac
{t}{\left(  e^{2\pi t}-1\right)  \left(  t^{2}+x^{2}\right)  x}dxdt.
\]
Hence we deduce that%
\[
\int_{1}^{n}\frac{\psi\left(  x+1\right)  }{x}dx=\frac{1}{2}\left(  \ln
n\right)  ^{2}-\frac{1}{2n}+\frac{1}{2}-\int_{0}^{\infty}\frac{\ln\left(
1+t^{2}\right)  -\ln\left(  1+\left(  t/n\right)  ^{2}\right)  }{\left(
e^{2\pi t}-1\right)  t}dt.
\]
It is clear that
\[
\int_{1}^{n}\frac{1}{x^{r}}\frac{d}{dx}\left(  x+1\right)  ^{\overline{r-1}%
}dx=\frac{1}{n^{r}}\left(  n+1\right)  ^{\overline{r-1}}-r!+r%
{\displaystyle\sum_{j=0}^{r}}
\genfrac{[}{]}{0pt}{0}{r}{j}%
\frac{1-\frac{1}{n^{r+1-j}}}{\left(  r+1-j\right)  }%
\]
and%
\[
\psi\left(  r\right)  \int_{1}^{n}\frac{x^{\overline{r}}}{x^{r+1}}%
dx=\psi\left(  r\right)  \ln n+\psi\left(  r\right)
{\displaystyle\sum_{j=0}^{r-1}}
\genfrac{[}{]}{0pt}{0}{r}{j}%
\frac{1}{r-j}\left(  1-\frac{1}{n^{r-j}}\right)  .
\]
The results above yield (\ref{int}).
\end{proof}

We now consider the limit case of the integral%
\[
\int_{1}^{n}\frac{\psi\left(  x+1\right)  }{x^{p}}dx
\]
in (\ref{int}), in which we require the constant $\sigma_{p}$:%
\[
\sigma_{p}=\sum_{k=1}^{\infty}\frac{\left(  -1\right)  ^{k-1}}{k}\zeta\left(
p+k\right)  ,\text{ }p\geq1.
\]
The constant $\sigma_{p}$ occurs in several series and integral evaluations
(see \cite{Bo2018, CaC2, Cop, DBA}).

\begin{lemma}
\label{int-psi}For an integer $p\geq2,$ we have%
\[
\int_{1}^{\infty}\frac{\psi\left(  x+1\right)  }{x^{p}}dx=-\frac{\gamma}%
{p-1}+\left(  -1\right)  ^{p}\left(  \sigma_{p}-\zeta^{\prime}\left(
p\right)  \right)  +\sum_{j=2}^{p-1}\frac{\left(  -1\right)  ^{p-1-j}}%
{j-1}\zeta\left(  p+1-j\right)  .
\]

\end{lemma}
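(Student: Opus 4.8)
The plan is to expand the digamma function via its classical series representation
\[
\psi(x+1) = -\gamma + \sum_{n=1}^{\infty}\left(\frac{1}{n} - \frac{1}{n+x}\right),
\]
and integrate term by term against $x^{-p}$ over $[1,\infty)$. First I would split off the constant $-\gamma$, which contributes $-\gamma\int_{1}^{\infty}x^{-p}\,dx = -\gamma/(p-1)$ and accounts for the first term on the right-hand side. Since each remaining summand satisfies $\frac{1}{x^{p}}\bigl(\frac{1}{n} - \frac{1}{n+x}\bigr) = \frac{1}{n\,x^{p-1}(n+x)} \geq 0$, Tonelli's theorem justifies interchanging the sum and the integral, reducing the problem to evaluating $\sum_{n=1}^{\infty}\frac{1}{n}\int_{1}^{\infty}\frac{dx}{x^{p-1}(n+x)}$.

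Next I would compute each inner integral through the partial-fraction decomposition
\[
\frac{1}{x^{p-1}(x+n)} = \sum_{j=1}^{p-1}\frac{(-1)^{p-1-j}}{n^{p-j}x^{j}} + \frac{(-1)^{p-1}}{n^{p-1}(x+n)}.
\]
The terms with $j\geq 2$ integrate to $\frac{1}{j-1}$ and, after multiplying by $1/n$ and summing over $n$, produce $\sum_{j=2}^{p-1}\frac{(-1)^{p-1-j}}{j-1}\zeta(p+1-j)$, which is exactly the finite sum in the statement. The individually divergent $j=1$ term and the $1/(x+n)$ term must be combined before integrating: their coefficients pair into $\frac{(-1)^{p}}{n^{p-1}}\bigl(\frac{1}{x} - \frac{1}{x+n}\bigr)$, whose integral over $[1,\infty)$ is the convergent quantity $\frac{(-1)^{p}}{n^{p-1}}\ln(1+n)$. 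Multiplying by $1/n$ and summing leaves $(-1)^{p}\sum_{n=1}^{\infty}\frac{\ln(1+n)}{n^{p}}$ to be identified.

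It then remains to show $\sum_{n=1}^{\infty}\frac{\ln(1+n)}{n^{p}} = \sigma_{p} - \zeta'(p)$. I would write $\ln(1+n) = \ln n + \ln(1+1/n)$; the first piece gives $\sum_{n}\frac{\ln n}{n^{p}} = -\zeta'(p)$, while for the second I would insert the Mercator series $\ln(1+1/n) = \sum_{k=1}^{\infty}\frac{(-1)^{k-1}}{k\,n^{k}}$ and interchange the order of summation to obtain $\sum_{k=1}^{\infty}\frac{(-1)^{k-1}}{k}\zeta(p+k) = \sigma_{p}$. Assembling the three contributions yields the claimed formula.

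The main obstacle is this last interchange, because the resulting double series is not absolutely convergent: the $n=1$ row makes $\sum_{k}\frac{1}{k}\zeta(p+k)$ diverge, since $\zeta(p+k)\to 1$. I would handle this by isolating the $n=1$ term, equivalently by splitting $\zeta(p+k) = 1 + \sum_{n\geq 2}n^{-(p+k)}$. The tail $\sum_{n\geq 2}$ yields an absolutely convergent double series, dominated by $\sum_{k}\frac{1}{k}\bigl(\zeta(p+k)-1\bigr) < \infty$, for which Fubini applies, while the isolated constant $\sum_{k}\frac{(-1)^{k-1}}{k} = \ln 2$ is precisely the $n=1$ term $\ln(1+1/1)/1^{p}$. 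The earlier interchanges are unproblematic, resting on positivity of the integrand (Tonelli), so this single point of conditional convergence is where the argument genuinely requires care.
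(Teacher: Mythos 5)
Your proof is correct and follows essentially the same route as the paper's: the classical series representation of $\psi(x+1)$, the identical partial-fraction decomposition of $1/(x^{p-1}(x+n))$, and the same pairing of the two individually divergent pieces into $(-1)^p n^{1-p}\ln(1+n)$. The only divergence is that where the paper simply cites \cite[Theorem 4]{DBA} for $\sum_{k\geq 1}\ln(k+1)/k^{p}=\sigma_{p}-\zeta^{\prime}(p)$, you prove that identity directly, with a correct handling of the conditionally convergent interchange by isolating the $n=1$ row.
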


\begin{proof}
In view of the series representation \cite[Eq. 6.3.16]{AS}
\[
\psi\left(  x+1\right)  =-\gamma+\sum_{k=1}^{\infty}\frac{x}{k\left(
k+x\right)  }%
\]
we obtain
\[
\int_{1}^{n}\frac{\psi\left(  x+1\right)  }{x^{p}}dx=\frac{\gamma}{p-1}\left(
\frac{1}{n^{p+1}}-1\right)  +\sum_{k=1}^{\infty}\frac{1}{k}\int_{1}^{n}%
\frac{dx}{\left(  k+x\right)  x^{p-1}}.
\]
Considering the following partial fraction decomposition
\[
\frac{1}{x^{p-1}\left(  k+x\right)  }=\sum_{j=1}^{p-1}\frac{A_{j}}{x^{j}%
}+\frac{1}{\left(  -k\right)  ^{p-1}}\frac{1}{k+x},
\]
where $A_{j}=\left(  -1\right)  ^{p-1-j}/k^{p-j},$ $1\leq j\leq p-1$, we see
that%
\begin{align*}
\int_{1}^{n}\frac{\psi\left(  x+1\right)  }{x^{p}}dx  &  =\frac{\gamma}%
{p-1}\left(  \frac{1}{n^{p+1}}-1\right)  +\sum_{k=1}^{\infty}\sum_{j=2}%
^{p-1}\frac{\left(  -1\right)  ^{p-1-j}}{k^{p+1-j}}\frac{1}{j-1}\left(
1-\frac{1}{n^{j-1}}\right) \\
&  +\sum_{k=1}^{\infty}\frac{\left(  -1\right)  ^{p}}{k^{p}}\left(  \ln
\frac{n}{k+n}-\ln\frac{1}{k+1}\right)  .
\end{align*}
Letting $n\rightarrow\infty,$ with the use of \cite[Theorem 4]{DBA}%
\[
\sum_{k=1}^{\infty}\frac{\ln\left(  k+1\right)  }{k^{p}}=\sigma_{p}%
-\zeta^{\prime}\left(  p\right)  ,
\]
we arrive at the desired result.
\end{proof}

\begin{remark}
The formula given in Lemma \ref{int-psi} has also been recorded in the recent
paper \cite[Lemma 1]{Cop}, with a slightly different proof.
\end{remark}

We now recall the formula \cite[Theorem 2.1]{XL},
\begin{align}
\sum_{n=r}^{\infty}\frac{H_{n}}{\left(  n+1-r\right)  ^{p}}  &  =\frac{1}%
{2}\left(  p+2\right)  \zeta\left(  p+1\right)  -\frac{1}{2}\sum_{j=1}%
^{p-2}\zeta\left(  p-j\right)  \zeta\left(  j+1\right) \nonumber\\
&  -\sum_{m=1}^{p-1}\left(  -1\right)  ^{m}\zeta\left(  p+1-m\right)
H_{r-1}^{\left(  m\right)  }-\left(  -1\right)  ^{p}\sum_{m=1}^{r}\frac{H_{m}%
}{m^{p}} \label{xl}%
\end{align}
for $p\in\mathbb{N}\backslash\left\{  1\right\}  ,$ which appears in the
following evaluation formula for the constant $\gamma_{h^{\left(  r\right)  }%
}.$

\begin{theorem}
\label{teo1}Let $r$ be a non-negative integer. Then,%
\begin{align*}
\Gamma\left(  r\right)  \gamma_{h^{\left(  r\right)  }}  &  =\frac{1}{2}%
\gamma^{2}+\frac{1}{2}\zeta\left(  2\right)  -\frac{1}{2}+\int_{0}^{\infty
}\frac{\ln\left(  1+t^{2}\right)  }{\left(  e^{2\pi t}-1\right)  t}dt-\left(
\psi\left(  r\right)  +\gamma\right)  \gamma+r!-r\\
&  +\sum_{j=1}^{r-1}\frac{H_{j}}{j}+\sum_{j=1}^{r-1}%
\genfrac{[}{]}{0pt}{}{r}{j}%
\left(  E\left(  r,j\right)  +\frac{\psi\left(  r\right)  }{r-j}-\frac
{r}{r+1-j}-H_{r-1}\zeta\left(  r+1-j\right)  \right)  ,
\end{align*}
where%
\begin{equation}
E\left(  r,j\right)  =\sum_{k=r}^{\infty}\frac{H_{k}}{\left(  k+1-r\right)
^{r+1-j}}-\int_{1}^{\infty}\frac{\psi\left(  x+1\right)  }{x^{r+1-j}%
}dx.\nonumber
\end{equation}

\end{theorem}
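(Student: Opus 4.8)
The plan is to exploit the representation $\gamma_{h^{(r)}}=\lim_{n\to\infty}y_{n}(r)$ from Theorem \ref{mteo1}b, multiply through by $\Gamma(r)$, and insert the two closed forms already at hand:
\[
\Gamma(r)\gamma_{h^{(r)}}=\lim_{n\to\infty}\Bigl(\Gamma(r)\sum_{k=1}^{n}\frac{h_{k}^{(r)}}{k^{r}}-\Gamma(r)\int_{1}^{n}\frac{h_{x}^{(r)}}{x^{r}}dx\Bigr),
\]
where the sum is given by Lemma \ref{Top} and the integral by Lemma \ref{Int}. After substitution I would sort the resulting terms into three classes according to their behaviour as $n\to\infty$: those that diverge, those with a finite nonzero limit, and those that vanish.

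The crux is the cancellation of the divergent part. The only unbounded contributions are $\tfrac12\bigl((H_{n})^{2}+H_{n}^{(2)}\bigr)-(\psi(r)+\gamma)H_{n}$ from the sum and $\tfrac12(\ln n)^{2}-\psi(r)\ln n$ from the integral. Using $H_{n}=\ln n+\gamma+o(1)$ and $H_{n}^{(2)}\to\zeta(2)$ I would expand $\tfrac12(H_{n})^{2}=\tfrac12(\ln n)^{2}+\gamma\ln n+\tfrac12\gamma^{2}+o(1)$; the quadratic terms cancel against the integral, and the linear terms combine as $\gamma\ln n-(\psi(r)+\gamma)\ln n+\psi(r)\ln n=0$. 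What remains is the finite constant $\tfrac12\gamma^{2}+\tfrac12\zeta(2)-(\psi(r)+\gamma)\gamma$. This bookkeeping of the logarithmic divergences, where a single sign or coefficient slip would spoil the cancellation, is the step I expect to be the main obstacle.

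The remaining terms are collected one by one. The pieces $-H_{r-1}(H_{n+r-1}-H_{n})$, $\sum_{j=1}^{r-1}H_{j-1}/(j+n)$, and $(n+1)^{\overline{r-1}}/n^{r}$ (whose numerator is a polynomial of degree $r-1$) all tend to $0$, while $-(r+\tfrac12)(1-\tfrac1n)$ supplies the constants $-r$ and $-\tfrac12$, and $r!$ is untouched. For the remaining integral, dominated convergence yields $\int_{0}^{\infty}\frac{\ln(1+t^{2})-\ln(1+(t/n)^{2})}{(e^{2\pi t}-1)t}dt\to\int_{0}^{\infty}\frac{\ln(1+t^{2})}{(e^{2\pi t}-1)t}dt$, and the separated term $\sum_{j=1}^{r-1}H_{j}/j$ passes through unchanged from Lemma \ref{Top}.

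It then remains to handle the Stirling-weighted blocks. In each block I would apply the index shift $k\mapsto k+r-1$, so that $\sum_{k=1}^{\infty}H_{k+r-1}/k^{r+1-j}=\sum_{k=r}^{\infty}H_{k}/(k+1-r)^{r+1-j}$; pairing this with $-\int_{1}^{\infty}\psi(x+1)/x^{r+1-j}dx$ produces precisely $E(r,j)$, the convergence of the sum being guaranteed by (\ref{xl}) and that of the integral by Lemma \ref{int-psi}. Finally $-(\psi(r)+\gamma)H_{n}^{(r+1-j)}\to-H_{r-1}\zeta(r+1-j)$ (via $\psi(r)+\gamma=H_{r-1}$), $-\tfrac{r}{r+1-j}(1-n^{-(r+1-j)})\to-\tfrac{r}{r+1-j}$, and $\tfrac{\psi(r)}{r-j}(1-n^{-(r-j)})\to\tfrac{\psi(r)}{r-j}$. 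Assembling these limits, and using $\genfrac{[}{]}{0pt}{}{r}{0}=0$ for $r\ge1$ so that every Stirling-weighted sum effectively runs from $j=1$ to $r-1$, reproduces the stated identity.
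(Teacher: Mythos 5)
Your proposal is correct and takes essentially the same route as the paper: the authors likewise substitute Lemma \ref{Top} and Lemma \ref{Int} into $\Gamma(r)\,y_{n}(r)$ and let $n\to\infty$ using the expansion of $H_{n}$, leaving the cancellation of the $(\ln n)^{2}$ and $\ln n$ terms and the reindexing that produces $E(r,j)$ as unstated ``manipulations'' which you have carried out explicitly and accurately.
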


\begin{proof}
From Lemma \ref{Top} and Lemma \ref{Int}, we have
\begin{align}
&  \Gamma\left(  r\right)  \left(  \sum_{k=1}^{n}\frac{h_{k}^{\left(
r\right)  }}{k^{r}}-\int_{1}^{n}\frac{h_{x}^{\left(  r\right)  }}{x^{r}%
}dx\right)  =\frac{\left(  H_{n}\right)  ^{2}+H_{n}^{\left(  2\right)  }}%
{2}-\left(  \psi\left(  r\right)  +\gamma\right)  H_{n}-\frac{1}{2}\left(  \ln
n\right)  ^{2}\nonumber\\
&  +\psi\left(  r\right)  \ln n-H_{r-1}\left(  H_{n+r-1}-H_{n}\right)
+\sum_{j=1}^{r-1}\frac{H_{j-1}}{j+n}-\frac{\left(  n+1\right)  ^{\overline
{r-1}}}{n^{r}}\nonumber\\
&  +\sum_{j=0}^{r-1}%
\genfrac{[}{]}{0pt}{}{r}{j}%
\left(  \sum_{k=1}^{n}\frac{H_{k+r-1}}{k^{r+1-j}}-\left(  \psi\left(
r\right)  +\gamma\right)  H_{n}^{\left(  r+1-j\right)  }\right)  +\sum
_{j=1}^{r-1}\frac{H_{j}}{j}-\left(  r+\frac{1}{2}\right)  \left(  1-\frac
{1}{n}\right) \nonumber\\
&  +r!-%
{\displaystyle\sum_{j=0}^{r-1}}
\genfrac{[}{]}{0pt}{0}{r}{j}%
\left\{  \frac{r}{r+1-j}\left(  1-\frac{1}{n^{r+1-j}}\right)  -\frac
{\psi\left(  r\right)  }{r-j}\left(  1-\frac{1}{n^{r-j}}\right)  \right\}
\nonumber\\
&  +\int_{0}^{\infty}\frac{\ln\left(  1+t^{2}\right)  -\ln\left(  1+\left(
t/n\right)  ^{2}\right)  }{\left(  e^{2\pi t}-1\right)  t}dt-%
{\displaystyle\sum_{j=0}^{r-1}}
\genfrac{[}{]}{0pt}{0}{r}{j}%
\int_{1}^{n}\frac{\psi\left(  x+1\right)  }{x^{r+1-j}}dx. \label{7}%
\end{align}
Considering the well-known properties%
\begin{equation}
H_{n}=\ln n+\gamma+\frac{1}{2n}-\frac{1}{12n^{2}}+O\left(  n^{-4}\right)
\text{ and }\lim_{n\rightarrow\infty}\left(  H_{n+r-1}-H_{n}\right)  =0,
\label{Hn}%
\end{equation}
and then letting $n\rightarrow\infty$ in (\ref{7}) give the desired formula
after some manipulations.
\end{proof}

With the help of Lemma \ref{int-psi} and (\ref{xl}), the statement of Theorem
\ref{teo1} can be equivalently written as
\begin{align*}
&  \Gamma\left(  r\right)  \gamma_{h^{\left(  r\right)  }}=\frac{1}{2}%
\gamma^{2}+\frac{1}{2}\zeta\left(  2\right)  -\frac{1}{2}+\int_{0}^{\infty
}\frac{\ln\left(  1+t^{2}\right)  }{\left(  e^{2\pi t}-1\right)  t}%
dt+\sum_{j=1}^{r-1}\frac{H_{j}}{j}-\left(  \psi\left(  r\right)
+\gamma\right)  \gamma+r!\\
&  -r+\sum_{j=1}^{r-1}%
\genfrac{[}{]}{0pt}{}{r}{j}%
\left\{
\begin{array}
[c]{l}%
\frac{r+3-j}{2}\zeta\left(  r+2-j\right)  -\frac{1}{2}\sum\limits_{v=1}%
^{r-j-1}\zeta\left(  r+1-j-v\right)  \zeta\left(  v+1\right) \\
-\sum\limits_{v=2}^{r-j}\left(  -1\right)  ^{v}\zeta\left(  r+2-j-v\right)
\left(  H_{r-1}^{\left(  v\right)  }+\dfrac{\left(  -1\right)  ^{r-j}}%
{v-1}\right)  +\frac{H_{r-1}}{r-j}\\
+\left(  -1\right)  ^{r-j}\left(  \sigma_{r+1-j}-\zeta^{\prime}\left(
r+1-j\right)  +\sum\limits_{v=1}^{r-1}\frac{H_{v}}{v^{r+1-j}}\right)
-\frac{r}{r+1-j}%
\end{array}
\right\}  .
\end{align*}
By straightforward computations we observe that
\begin{align*}
\gamma_{h^{\left(  0\right)  }}  &  =-\gamma\lim_{r\rightarrow0}\frac
{\psi\left(  r\right)  }{\Gamma\left(  r\right)  }=\gamma,\\
\gamma_{h^{\left(  1\right)  }}  &  =\frac{1}{2}\gamma^{2}+\frac{1}{2}%
\zeta\left(  2\right)  -\frac{1}{2}+\int_{0}^{\infty}\frac{\ln\left(
1+t^{2}\right)  }{\left(  e^{2\pi t}-1\right)  t}dt,\\
\gamma_{h^{\left(  2\right)  }}  &  =\gamma_{h^{\left(  1\right)  }}%
-\gamma-\sigma_{2}+\zeta^{^{\prime}}\left(  2\right)  +2\zeta\left(  3\right)
,\\
\gamma_{h^{\left(  3\right)  }}  &  =\frac{1}{2}\gamma_{h^{\left(  2\right)
}}-\frac{1}{4}\gamma+2\zeta\left(  3\right)  +\frac{1}{72}\pi^{2}\left(
\pi^{2}-27\right)  +\frac{5}{4}-\zeta^{\prime}\left(  3\right)  +\zeta
^{\prime}\left(  2\right)  -\sigma_{2}+\sigma_{3}.
\end{align*}

Moreover, according to table on page \pageref{liste} it seems that
$\gamma_{h^{\left(  r\right)  }}\leq\gamma_{h^{\left(  r+1\right)  }}$ for
$r\geq1$. Our attempts to prove this observation were not successful,
therefore it remains an open question.

\begin{remark}
Combining Connon's \cite{Co} results (3.9) and (3.46) gives (with a misprint
corrected)%
\[
\int_{0}^{\infty}\frac{\ln\left(  1+t^{2}\right)  }{\left(  e^{2\pi
t}-1\right)  t}dt=\sigma_{1}+\frac{1}{2}-\zeta\left(  2\right)  +\gamma_{1}.
\]

\end{remark}

\subsection{The constants $\overline{\gamma}_{h^{\left(  r\right)  }}$}

The following lemmas are the\ analogues of Lemma \ref{Top} and Lemma \ref{Int}.

\begin{lemma}
\label{Top-}Let $r$ be a non-negative integer. Then,%
\begin{align*}
\Gamma\left(  r\right)  \sum_{k=1}^{n}\frac{h_{k}^{\left(  r\right)  }%
}{k^{\overline{r}}}  &  =\frac{\left(  H_{n}\right)  ^{2}+H_{n}^{\left(
2\right)  }}{2}-H_{r-1}\left(  H_{n+r-1}-H_{n}\right)  -\left(  \psi\left(
r\right)  +\gamma\right)  H_{n}\\
&  +\sum_{j=1}^{r-1}\frac{H_{j}}{j}+\sum_{j=1}^{r-1}\frac{H_{j-1}}{j+n}.
\end{align*}

\end{lemma}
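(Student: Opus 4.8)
The plan is to exploit a cancellation that makes this lemma substantially simpler than its companion Lemma \ref{Top}. Whereas dividing $h_{k}^{\left(r\right)}$ by $k^{r}$ forced the expansion of $k^{\overline{r}}$ via the Stirling numbers in (\ref{3}), dividing instead by $k^{\overline{r}}$ cancels the factor $x^{\overline{r}}$ sitting in the numerator of the analytic extension (\ref{2}). So the first step is to write, directly from (\ref{2}) together with $\psi\left(r\right)+\gamma=H_{r-1}$,
\[
\frac{h_{k}^{\left(r\right)}}{k^{\overline{r}}}=\frac{\psi\left(k+r\right)-\psi\left(r\right)}{k\,\Gamma\left(r\right)}=\frac{H_{k+r-1}-H_{r-1}}{k\,\Gamma\left(r\right)}.
\]
In contrast to the expansion used for Lemma \ref{Top}, no Stirling numbers survive at all.

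Next I would sum this identity over $k=1,\ldots,n$ and separate the two contributions, obtaining
\[
\Gamma\left(r\right)\sum_{k=1}^{n}\frac{h_{k}^{\left(r\right)}}{k^{\overline{r}}}=\sum_{k=1}^{n}\frac{H_{k+r-1}}{k}-H_{r-1}H_{n}.
\]
The sum $\sum_{k=1}^{n}H_{k+r-1}/k$ on the right is exactly the quantity already evaluated in the course of proving Lemma \ref{Top}, namely the closed form (\ref{top1}). Thus I can cite (\ref{top1}) directly and substitute it without reproving the underlying index-shift and Adamchik identity.

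Finally I would combine the two displays: inserting (\ref{top1}) and rewriting $H_{r-1}H_{n}=\left(\psi\left(r\right)+\gamma\right)H_{n}$ reproduces the claimed right-hand side term by term, matching the $\tfrac{(H_{n})^{2}+H_{n}^{(2)}}{2}$ contribution, the $-H_{r-1}(H_{n+r-1}-H_{n})$ term, the $-(\psi(r)+\gamma)H_{n}$ term, and the two finite sums $\sum_{j=1}^{r-1}H_{j}/j$ and $\sum_{j=1}^{r-1}H_{j-1}/(j+n)$. I do not anticipate any genuine obstacle here: the only point that demands attention is recognizing the cancellation of $k^{\overline{r}}$ in the first step, after which the lemma follows immediately from the identity (\ref{top1}) established earlier.
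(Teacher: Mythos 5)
Your proposal is correct and follows exactly the route the paper takes: the paper's proof of Lemma \ref{Top-} likewise consists of applying (\ref{2}), the identity $\psi(r)+\gamma=H_{r-1}$, and substituting (\ref{top1}). The cancellation of $k^{\overline{r}}$ that you highlight is precisely why the paper's proof is a one-line citation rather than a repeat of the Stirling-number expansion used for Lemma \ref{Top}.
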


\begin{proof}
The proof follows from (\ref{2}), $\psi\left(  r\right)  +\gamma=H_{r-1}$ and
(\ref{top1}).
\end{proof}

The proof of the following lemma is similar to the proof of Lemma \ref{Int}.
Thus we omit it.

\begin{lemma}
\label{Int-}Let $r$ be a non-negative integer. Then,%
\begin{align*}
\Gamma\left(  r\right)  \int_{1}^{n}\frac{h_{x}^{\left(  r\right)  }%
}{x^{\overline{r}}}dx  &  =\frac{1}{2}\left(  \ln n\right)  ^{2}-\frac{1}%
{2n}+\frac{1}{2}-\int_{0}^{\infty}\frac{\ln\left(  1+t^{2}\right)  -\ln\left(
1+\left(  t/n\right)  ^{2}\right)  }{\left(  e^{2\pi t}-1\right)  t}dt\\
&  +\sum\limits_{j=1}^{r-1}\frac{\ln n+\ln\left(  1+j\right)  -\ln\left(
n+j\right)  }{j}-\psi\left(  r\right)  \ln n.
\end{align*}

\end{lemma}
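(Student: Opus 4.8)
The plan is to reduce the integral to a digamma integral by exploiting the cancellation in the analytic extension (\ref{2}). Since
\[
\Gamma\left(  r\right)  \frac{h_{x}^{\left(  r\right)  }}{x^{\overline{r}}}=\frac{\psi\left(  x+r\right)  -\psi\left(  r\right)  }{x},
\]
the factor $x^{\overline{r}}$ in the denominator cancels the one coming from $h_{x}^{\left(  r\right)  }$. This is precisely what makes the present computation shorter than that of Lemma \ref{Int}: here no Stirling-number expansion (\ref{3}) is needed, because we are left with the clean weight $1/x$ rather than $x^{\overline{r}}/x^{r+1}$. Thus the whole quantity equals $\int_{1}^{n}\left(  \psi\left(  x+r\right)  -\psi\left(  r\right)  \right)  x^{-1}\,dx$.

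First I would iterate the recurrence $\psi\left(  x+1\right)  =\psi\left(  x\right)  +1/x$ to obtain $\psi\left(  x+r\right)  =\psi\left(  x+1\right)  +\sum_{j=1}^{r-1}\left(  x+j\right)  ^{-1}$, and split the integral into three pieces:
\[
\int_{1}^{n}\frac{\psi\left(  x+1\right)  }{x}\,dx+\sum_{j=1}^{r-1}\int_{1}^{n}\frac{dx}{x\left(  x+j\right)  }-\psi\left(  r\right)  \int_{1}^{n}\frac{dx}{x}.
\]
The third piece is simply $-\psi\left(  r\right)  \ln n$. The first piece is exactly the integral already evaluated inside the proof of Lemma \ref{Int} via the representation \cite[Eq. 6.3.21]{AS} of $\psi$ together with an interchange of the order of integration; I would import it verbatim as
\[
\int_{1}^{n}\frac{\psi\left(  x+1\right)  }{x}\,dx=\frac{1}{2}\left(  \ln n\right)  ^{2}-\frac{1}{2n}+\frac{1}{2}-\int_{0}^{\infty}\frac{\ln\left(  1+t^{2}\right)  -\ln\left(  1+\left(  t/n\right)  ^{2}\right)  }{\left(  e^{2\pi t}-1\right)  t}\,dt.
\]

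Second, each integral in the middle sum is elementary: the partial fraction $\frac{1}{x\left(  x+j\right)  }=\frac{1}{j}\left(  \frac{1}{x}-\frac{1}{x+j}\right)  $ gives
\[
\int_{1}^{n}\frac{dx}{x\left(  x+j\right)  }=\frac{1}{j}\left[  \ln x-\ln\left(  x+j\right)  \right]  _{1}^{n}=\frac{\ln n+\ln\left(  1+j\right)  -\ln\left(  n+j\right)  }{j},
\]
which reproduces the sum $\sum_{j=1}^{r-1}\left(  \ln n+\ln\left(  1+j\right)  -\ln\left(  n+j\right)  \right)  /j$ in the statement. Adding the three pieces yields the claimed formula. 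I do not anticipate any real obstacle here: the only step carrying genuine analytic content is the $\psi\left(  x+1\right)  /x$ integral, and since it is already established in the proof of Lemma \ref{Int}, the present argument is entirely routine — which is why the authors omit it.
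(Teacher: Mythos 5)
Your proof is correct and is essentially the argument the paper has in mind when it omits the proof as "similar to the proof of Lemma \ref{Int}": the cancellation $\Gamma(r)h_{x}^{(r)}/x^{\overline{r}}=(\psi(x+r)-\psi(r))/x$, the recurrence $\psi(x+r)=\psi(x+1)+\sum_{j=1}^{r-1}(x+j)^{-1}$, the imported evaluation of $\int_{1}^{n}\psi(x+1)x^{-1}dx$, and the elementary partial fractions all check out and assemble to the stated formula.
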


To present the counterpart of Theorem \ref{teo1} we use Lemma \ref{Top-} and
Lemma \ref{Int-}. Thus,
\begin{align*}
&  \Gamma\left(  r\right)  \left(  \sum_{k=1}^{n}\frac{h_{k}^{\left(
r\right)  }}{k^{\overline{r}}}-\int_{1}^{n}\frac{h_{x}^{\left(  r\right)  }%
}{x^{\overline{r}}}dx\right) \\
&  \ =\frac{\left(  H_{n}\right)  ^{2}+H_{n}^{\left(  2\right)  }}{2}-\frac
{1}{2}\left(  \ln n\right)  ^{2}+\psi\left(  r\right)  \ln n-\left(
\psi\left(  r\right)  +\gamma\right)  H_{n}\\
&  \quad-\frac{1}{2}+\int_{0}^{\infty}\frac{\ln\left(  1+t^{2}\right)
-\ln\left(  1+\left(  t/n\right)  ^{2}\right)  }{\left(  e^{2\pi t}-1\right)
t}dt+\sum_{j=1}^{r-1}\frac{H_{j}-\ln\left(  1+j\right)  }{j}\\
&  \quad+\sum_{j=1}^{r-1}\frac{H_{j-1}}{j+n}-H_{r-1}\left(  H_{n+r-1}%
-H_{n}\right)  -\sum\limits_{j=1}^{r-1}\frac{\ln n-\ln\left(  n+j\right)  }%
{j}+\frac{1}{2n}.
\end{align*}
We now let $n\rightarrow\infty$, Using (\ref{Hn}) we deduce that%
\[
\Gamma\left(  r\right)  \overline{\gamma}_{h^{\left(  r\right)  }}%
=\frac{\gamma^{2}}{2}+\frac{\zeta\left(  2\right)  }{2}-\frac{1}{2}+\int%
_{0}^{\infty}\frac{\ln\left(  1+t^{2}\right)  }{\left(  e^{2\pi t}-1\right)
t}dt-\left(  \psi\left(  r\right)  +\gamma\right)  \gamma+\sum_{j=1}%
^{r-1}\frac{H_{j}-\ln\left(  1+j\right)  }{j}.
\]
Thus, we have obtained the following theorem.

\begin{theorem}
\label{teo1-}Let $r$ be a non-negative integer. Then,%
\[
\overline{\gamma}_{h^{\left(  r\right)  }}=\frac{1}{\Gamma\left(  r\right)
}\left(  \gamma_{h^{\left(  1\right)  }}-\left(  \psi\left(  r\right)
+\gamma\right)  \gamma+\sum_{j=1}^{r-1}\frac{H_{j}-\ln\left(  1+j\right)  }%
{j}\right)  .
\]

\end{theorem}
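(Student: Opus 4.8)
The plan is to read off $\overline{\gamma}_{h^{\left(r\right)}}$ as the limit of $b_{n}\left(r\right)=\sum_{k=1}^{n}h_{k}^{\left(r\right)}/k^{\overline{r}}-\int_{1}^{n}h_{x}^{\left(r\right)}/x^{\overline{r}}\,dx$, which exists by Theorem \ref{mteo2}(b). First I would assemble an explicit closed form for $\Gamma\left(r\right)b_{n}\left(r\right)$ by subtracting the integral evaluation of Lemma \ref{Int-} from the sum evaluation of Lemma \ref{Top-}. The two $\tfrac12\left(\ln n\right)^{2}$-type contributions partially combine, leaving the block $\tfrac12\left(\left(H_{n}\right)^{2}+H_{n}^{\left(2\right)}\right)-\tfrac12\left(\ln n\right)^{2}+\psi\left(r\right)\ln n-\left(\psi\left(r\right)+\gamma\right)H_{n}$ carrying all the potential divergence, while the finite-$r$ corrections collect into the three tails $\sum_{j=1}^{r-1}\frac{H_{j}-\ln\left(1+j\right)}{j}$, $\sum_{j=1}^{r-1}\frac{H_{j-1}}{j+n}$, and $-\sum_{j=1}^{r-1}\frac{\ln n-\ln\left(n+j\right)}{j}$, together with the integral $\int_{0}^{\infty}\frac{\ln\left(1+t^{2}\right)-\ln\left(1+\left(t/n\right)^{2}\right)}{\left(e^{2\pi t}-1\right)t}\,dt$ and the constant $-\tfrac12$ produced by the $+\tfrac12$ of Lemma \ref{Int-}.

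The heart of the argument, and the step I expect to be the main obstacle, is the passage $n\rightarrow\infty$, where several individually divergent terms must cancel to a finite constant. Here I would use the expansion $H_{n}=\ln n+\gamma+O\!\left(n^{-1}\right)$ and the limit $\lim_{n\rightarrow\infty}\left(H_{n+r-1}-H_{n}\right)=0$ from \eqref{Hn}, together with $H_{n}^{\left(2\right)}\rightarrow\zeta\left(2\right)$. Expanding $\tfrac12\left(H_{n}\right)^{2}=\tfrac12\left(\ln n\right)^{2}+\gamma\ln n+\tfrac{\gamma^{2}}{2}+o\left(1\right)$, the $\left(\ln n\right)^{2}$ terms cancel outright, and the surviving $\gamma\ln n$ is absorbed by the logarithmic parts of $\psi\left(r\right)\ln n-\left(\psi\left(r\right)+\gamma\right)H_{n}$, so that this block tends to $\tfrac{\gamma^{2}}{2}+\tfrac{\zeta\left(2\right)}{2}-\left(\psi\left(r\right)+\gamma\right)\gamma$. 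I would also verify that each residual tail vanishes: $\sum_{j=1}^{r-1}H_{j-1}/\left(j+n\right)\rightarrow 0$ and $\sum_{j=1}^{r-1}\left(\ln n-\ln\left(n+j\right)\right)/j\rightarrow 0$ term by term, while $\ln\left(1+\left(t/n\right)^{2}\right)\rightarrow 0$ lets the integral converge (with uniform control via the exponential weight $e^{2\pi t}-1$) to $\int_{0}^{\infty}\frac{\ln\left(1+t^{2}\right)}{\left(e^{2\pi t}-1\right)t}\,dt$.

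Collecting the surviving constants yields
\[
\Gamma\left(r\right)\overline{\gamma}_{h^{\left(r\right)}}=\frac{\gamma^{2}}{2}+\frac{\zeta\left(2\right)}{2}-\frac{1}{2}+\int_{0}^{\infty}\frac{\ln\left(1+t^{2}\right)}{\left(e^{2\pi t}-1\right)t}\,dt-\left(\psi\left(r\right)+\gamma\right)\gamma+\sum_{j=1}^{r-1}\frac{H_{j}-\ln\left(1+j\right)}{j}.
\]
The final step is one of recognition rather than computation: the cluster $\tfrac{\gamma^{2}}{2}+\tfrac{\zeta\left(2\right)}{2}-\tfrac12+\int_{0}^{\infty}\frac{\ln\left(1+t^{2}\right)}{\left(e^{2\pi t}-1\right)t}\,dt$ is precisely the evaluation of $\gamma_{h^{\left(1\right)}}$ recorded after Theorem \ref{teo1}. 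Substituting $\gamma_{h^{\left(1\right)}}$ for that cluster and dividing by $\Gamma\left(r\right)$ produces exactly the claimed formula. Since Lemma \ref{Top-} and Lemma \ref{Int-} already do the structural work, the only genuine difficulty is the careful bookkeeping of the logarithmic cancellations in the limit; once those are settled the identity is immediate.
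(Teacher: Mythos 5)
Your proposal is correct and follows essentially the same route as the paper: subtract Lemma \ref{Int-} from Lemma \ref{Top-}, pass to the limit using the expansion of $H_{n}$ in (\ref{Hn}) so that the $\left(\ln n\right)^{2}$ and $\gamma\ln n$ divergences cancel, and identify the surviving constant cluster with the recorded value of $\gamma_{h^{\left(1\right)}}$. The bookkeeping you describe, including the vanishing of the finite tails and the convergence of the integral, matches the paper's argument step for step.
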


\end{document}